\newcommand{\la}{\langle}
\newcommand{\ra}{\rangle}
\newcommand{\pr}{\partial}
\newcommand{\dom}{\Omega}
\newcommand{\sig}{\Sigma}
\newcommand{\gam}{\Gamma}
\newcommand{\R}{\mathbb{R}}
\newcommand{\dd}{\,\text{d}}
\newcommand{\supp}{\mbox{supp\;}}
\newcommand{\cc}{\mathbf{C}}
\newcommand{\hb}{H^{1/2}_{co}}
\newcommand{\hbi}{H^{-1/2}_{co}}
\newtheorem{thm}{Theorem}[section]
\newtheorem{prop}{Proposition}[section]
\newtheorem{lem}{Lemma}[section]
\newtheorem{cor}{Corollary}[section]
\newtheorem{remark}{Remark}[section]
\title{Uniqueness for the inverse boundary value problem of piecewise homogeneous anisotropic elasticity in the time domain}
\author{C\u{a}t\u{a}lin I. C\^{a}rstea\thanks{School of Mathematics, Sichuan University, Chengdu, Sichuan, 610064, P.R.China; email: catalin.carstea@gmail.com}\and  Gen Nakamura\thanks{Hokkaido University, Sapporo  060-0808, Japan; email: gnaka@math.sci.hokudai.ac.jp}\and Lauri Oksanen\thanks{University College London, London, UK; email: l.oksanen@ucl.ac.uk} 
}
\date{}
\begin{document}
\maketitle

\begin{abstract}
We consider the inverse boundary value problem of recovering piecewise homogeneous elastic tensor and piecewise homogeneous mass density from a localized lateral Dirichlet-to-Neumann or Neumann-to-Dirichlet map for the elasticity equation in the space-time domain. We derive uniqueness for identifying these tensor and density on all domains of homogeneity that may be reached from the part of the boundary where the measurements are taken by a chain of subdomains whose successive interfaces contain a curved portion.
\end{abstract}

{\bf Keywords.} Inverse boundary value problem; uniqueness; anisotropic elasticity.\\

{\bf MSC(2000): } 35R30, 35L10

\section{Introduction}\label{introduction}

Inverse boundary value problems are concerned with the determination of the physical properties (represented as various coefficients of a model equation) of an object from measurements taken on the boundary. The mathematical investigation of such problems already has a history spanning nearly four decades, going back to \cite{Ca}. In this context, uniqueness refers to the property certain equations might have that if two sets of coefficients produce identical sets of boundary data, then the coefficients must also be identical.

For second order elliptic equations, the usual boundary data used is the set of Dirichlet and Neumann boundary values of the  solutions of the equation. Physically, these may be interpreted as measurements of electric potential and current density in the case of the conductivity equation for the electric impedance tomography, or as measurements of displacement and traction in the case of the static elasticity equation for the nondestructive testing. One common way to encode the measurement data is as the Dirichlet-to-Neumann map, which associates to the one Dirichlet datum the corresponding Neumann one. It is also possible to consider ``local data'', i.e. Dirichlet data supported on a part of the boundary paired with the restriction of the corresponding Neumann data to that same part.

In the case of isotropic materials, many uniqueness results are known for these type of measurement data. See, for example, \cite{SU} for the case of isotropic conductivity, or \cite{NU-3} for isotropic elasticity.  For anisotropic materials uniqueness usually doesn't hold. In the case of the anisotroipc conductivity equation, for example, diffeomorphisms that leave the boundary fixed preserve the Dirichlet-to-Neumann map but change the interior conductivity. It is conjectured that this is the only obstruction. If one knows apriori that the coefficients belong to a more restricted class, which is not preserved by diffeomorphisms, then it can be expected that uniqueness may  hold. One such instance (for example for the anisotropic conductivity or anisotropic elasticity cases) would be the the class of piecewise constant coefficients. Uniqueness for the anisotropic conductivity case with this restriction was shown in \cite{AHG}. For the (static) anisotropic elasticity equation, it was shown in \cite{CHN}.

For second order hyperbolic equations, lateral Dirichlet and Neumann data may be used. That is, time dependent Dirichlet and Neumann data measured on the boundary of the object, for solutions with zero initial Cauchy data. Uniqueness has been proved for various second order hyperbolic equations (for example, see \cite{B}, \cite{B-3}, \cite{KKL}). These results are based on the so-called Boundary Control method. Computational aspects of this method have been studied recently in
\cite{BIKS}, \cite{hoop}, \cite{PBK}, see also \cite{BG} for the first computational
implementation of the method. Uniqueness up to diffeomorphism in anisotropic materials has been shown in \cite{B-3}, \cite{BK}. There is only one uniqueness result known for a time domain anisotropic elasticity equation with smooth coefficients. That is for the hexagonal elasticity equation with ellipsoidal slowness surfaces (see \cite{Mazzucato}). In \cite{HNZ} a uniqueness result is proved for the piecewise constant elastic coefficients in the generally anisotropic case (or piecewise analytic, but with greater symmetry assumptions). 

The argument showing the uniqueness in \cite{HNZ}, as is typical for the above types of measurement data setups, consists of two steps. The first step is a ``boundary determination'' result. This is accomplished via a finite in time Laplace transform which relates the lateral Dirichlet-to-Neumann map of the hyperbolic problem to the symbol of the Dirichlet-to-Neumann map of an elliptic equation of elastic type. Using the techniques from \cite{NTU}, \cite{NU}, \cite{NU-2}, the coefficients at the boundary may be obtained from the given (local) data. The second step is an ``interior determination'' result. That is, for the above mentioned elliptic equation, it is shown that the local Dirichlet-to-Neumann on the boundary determines the local Dirichlet-to-Neumann map for the domain with the immediately adjacent domain of homogeneity removed, localized on a previously internal part of the boundary. This technique, inspired by \cite{I}, was used for interior determination in \cite{CHN}. It relies essentially on the Runge approximation property. Once these two steps are completed, the uniqueness result follows by induction. 

In their ``boundary determination'' result, there is no restriction on the observation time. However, it is important to remark that the full elliptic Dirichlet-to-Neumann map is needed for the ``interior determination'' step, not just its symbol. This then introduces the restriction that the observation time has to be sufficiently long. For this type of method, this is to be expected since, due to the finite speed of propagation property of the equation, we expect to need to wait for a certain amount of time before the waves generated by the boundary data can travel to every part of the domain of interest and then return to the part of the boundary where observations are made. Only after this large amount of time has passed enough measurement data can be obtained which enable to do the determination of the coefficients in the whole domain. However, this is not the kind of result that should be expected for a hyperbolic equation. In the hyperbolic case, the boundary data gathered for any length of time should provide information on the material at least in the part of the domain from which the elastic waves have enough time to return to the surface on which the data is collected. One of the aims of this paper is to provide this type of result.

We will consider  the inverse boundary value problem for the  piecewise homogeneous, generally anisotropic, elasticity equation in the space-time domain and prove the above type of uniqueness result. We will use the boundary determination result of \cite{HNZ}. In Section \ref{boundary} we sketch the connection between the time domain equation and the elliptic one via the finite time Laplace transform and then quote the relevant boundary uniqueness result from \cite{CHN},\cite{HNZ},\cite{NTU}.

We will not use the elliptic equation for the interior determination part of our result. Instead, we use hyperbolic techniques inspired by \cite{KOP}. This allows us to obtain a uniqueness result that is more typical for hyperbolic equations, i.e. \emph{we obtain uniqueness closer to the part of the boundary where the measurements are taken for shorter observation times and  further away from that part of the boundary for longer times}.

We would like to remark on the importance of showing uniqueness in the inverse boundary value problem for the local Neumann-to-Dirichlet map. The vibroseis exploration technique in the reflection seismology is used to investigate the underground structure of the Earth. The measurements taken using this technique correspond almost precisely to this kind of map, and not to a local Dirichlet-to-Neumann map (see \cite{Bateman}). Furthermore, the geological structure of the Earth is isotropic or transversally isotropic layered in its shallow part, and in its further depth part, it is close to piecewise homogeneous and can contain regions with more complicated anisotropic elasticity than transversally isotropic elasticity.

The rest of this paper is organized as follows. In what remains of Section \ref{introduction}, we formulate our inverse  problem and give our main results on the unique identification of piecewise homogeneous density and elasticity tensor. 
%We will use the terminology \emph{interface} to denote the place where both the density and elasticity tensor become discontinuous. 
The first main results is the unique identification for the case when the geometry of the homogeneous pieces is known. The second main result is in  the case when the geometry of this pieces is unknown. In Section \ref{boundary} we briefly review a boundary determination result that has been obtained previously in \cite{CHN}, \cite{HNZ}. In Section \ref{inner} we discuss interior determination results for both Dirichlet-to-Neumann and Neumann-to-Dirichlet maps. Then in Section \ref{proof}, by combining these the results of the previous two sections, we complete the proofs of our main results, in the second case making use of the theory of subanalytic sets. Section A which is an appendix provides a brief summary of results on the theory of subanalytic sets which are used in the proof of the second main result.

\subsection{Preliminaries}

Let $\dom\subset\R^3$ be an  open bounded connected domain, with Lipschitz boundary. For a $T>0$ we will denote $\dom_T=(0,T)\times\dom$ an $\Gamma=\pr\dom$.

An \emph{elastic tensor} $\cc=\cc(x)=(C_{ijkl}(x))_{i,j,k,l=1,2,3}$, $x=(x_1,x_2,x_3)\in\overline\dom$, is defined by real valued functions  $C_{ijkl}(x)$ which satisfy the symmetries
\begin{equation}
C_{ijkl}(x)=C_{ijlk}(x),\quad C_{ijkl}(x)=C_{klij}(x),\quad x\in\overline\Omega,\,\, i,j,k,l\in\{1,2,3\},
\end{equation}
and the strong convexity condition, i.e. there exists $\lambda>0$ such that for any symmetric matrix $\epsilon=(\epsilon_{ij})$,
\begin{equation}
\epsilon:(\cc:\epsilon)=\sum_{i,j,k,l=1}^3 C_{ijkl}(x)\epsilon_{ij}\epsilon_{kl}\ge \lambda (\epsilon:\epsilon),\,\,x\in\overline\Omega,
\end{equation}
where $\epsilon:\eta$ is the inner product of matrices $\epsilon$ and $\eta=(\eta_{ij})$ defined by
$\epsilon:\eta=\sum_{i,j=1}^3\epsilon_{ij}\eta_{ij}$, and $\cc:\eta$ is a matrix whose $(i,j)$ component $(\cc:\eta)_{ij}$ is defined as
$(\cc:\eta)_{ij}=\sum_{k,l=1}^3 C_{ijkl}(x)\eta_{kl}$. The density of mass function is given as a function $\rho(x)>\lambda$, $x\in\overline\dom$, which will be simply called \emph{density}.

In this paper we will make the further assumption that there are a finite number of open, connected,  Lipschtiz subdomains $D_\alpha$, $\alpha\in A$, that is subdomains with Lipschitz boundaries, such that $\bar\dom=\cup_{\alpha\in A}\bar D_\alpha$, $D_\alpha\cap D_\beta=\emptyset$ if $\alpha\neq\beta$, and $\cc$, $\rho$  are homogeneous in each $D_\alpha$.

For a function $u:\dom\to\R^3$ denoting the diplacement we write
\begin{equation}
(L_\cc u)_i = \sum_{j,k,l=1}^3\pr_j \left( C_{ijkl} \pr_k u_l\right), 
\end{equation}
where $\partial_j=\partial_{x_j}$
and for $u:\Omega_T\to \R^3$ we write
\begin{equation}
(P_{\rho,\cc} u)_i = \rho\pr_t^2 u_i - (L_\cc u)_i.
\end{equation}
Let $\nu$ be the unit outer normal to $\pr\dom$. We will denote the traction at $\partial\Omega$ by
\begin{equation}
(\pr_\cc u)_i:=[ (\cc:Du)\nu]_i=\sum_{j,k,l=1}^3\nu_jC_{ijkl}\pr_ku_l.
\end{equation}

\subsection{The forward problem and local boundary data}

We want to consider in this paper the equations
\begin{equation}
\left\{\begin{array}{l}
P_{\rho,\cc}u=0\,\,\text{in}\,\,\dom_T,\\[5pt]
(u,\pr_t u)|_{t=0}=0,
\end{array}\right.
\end{equation}
with Dirichlet or Neumann boundary conditions. At one point in the argument we will however need a result with slightly more general boundary conditions, namely with both Dirichlet and Neumann data given on complementary parts of the boundary. This kind of problems have been considered in the case of piezoelectric equations in \cite{AN}. Here we will give the restriction of their result to the case of hyperbolic elasticity equations.

We need to introduce a few function spaces. Suppose $\sig$ is an open subset of $\gam$. Let 
\begin{equation}
    H^{\pm1/2}_{co}(\sig)=\left\{ f\in H^{\pm1/2}(\gam): \supp g\subset\overline{\sig}\right\}.
\end{equation} 
with the restriction of the norm of $H^{\pm1/2}(\gam)$, and
\begin{equation}
    H^{\pm1/2}(\sig)=\left\{f|_\sig:f\in H^{\pm1/2}(\gam)\right\},
\end{equation}
with the norm
\begin{equation}
    ||f||_{H^{\pm1/2}(\sig)}=\inf_{\tilde f,\,\tilde f|_{\sig}=f}||\tilde f||_{H^{\pm1/2}(\gam)}.
\end{equation}

Suppose $\Gamma_u,\Gamma_s\subset\Gamma$  are disjoint connected open sets such that $\pr\gam_u=\pr\gam_s$ is a Lipschitz curve. We write
\begin{equation}
H^1_{\Gamma_u}(\dom)=\left\{u\in H^1(\dom): u|_{\Gamma_u}=0\right\}\,\,\text{and}\,\,(H^1_{\Gamma_u}(\dom))'\,\,\text{for its dual space}.
\end{equation}

Let $f\in H^3(0,T;H^{1/2}(\Gamma_u))$, $g\in H^1(0,T;H^{-1/2}(\Gamma_s))$ be boundary data, $F\in H^1(0,T;(H^1_{\Gamma_u}(\dom))')$ be a source term, and $u_0\in H^1(\dom)$, $u_1\in L^2(\dom)$ be initial data satisfying the compatibility condition
\begin{equation}
f(0,\cdot)=u_0|_{\Gamma_u}.
\end{equation}
Then we have the following theorem.
\begin{thm}\label{thm-forward}
There exists a unique $u\in L^\infty(0,T;H^1(\dom))$ with $\pr_t u\in L^\infty(0,T;L^2(\dom))$, $\pr_t^2u\in L^\infty(0,T;(H^1_{\Gamma_u}(\dom))')$ such that
\begin{equation}
\left\{\begin{array}{l}
P_{\rho,\cc}u=F\,\,\text{\rm in}\,\,\dom_T,\\[5pt]
u=f\,\,\text{\rm on}\,\,(0,T)\times\Gamma_u,\\[5pt]
\pr_\cc u=g\,\,\text{\rm on}\,\,(0,T)\times\Gamma_s\\[5pt]
u|_{t=0}=u_0,\;\pr_t u|_{t=0}=u_1,
\end{array}\right.
\end{equation}
and
\begin{multline}
||u||_{L^\infty(0,T;H^1(\dom))}+||\pr_t u||_{L^\infty(0,T;L^2(\dom))}+||\pr_t^2 u||_{L^\infty(0,T;(H^1_{\Gamma_u}(\dom))')}\\[5pt]
\leq
C(||u_0||_{H^1(\dom;\R^3)}+||u_1||_{L^2(\dom)}
+||F||_{H^1(0,T;(H^1_{\Gamma_u}(\dom))')}\\[5pt]
+||f||_{H^3(0,T;H^{1/2}(\Gamma_u))}
+||g||_{H^1(0,T;H^{-1/2}(\Gamma_s))}),
\end{multline}
where the constant $C$ depends on $\dom$, $T$, $\lambda$, $||\cc||_{L^\infty(\dom)}$, $||\rho||_{L^\infty(\dom)}$.
\end{thm}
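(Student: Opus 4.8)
The statement specializes the well-posedness theory for mixed-type hyperbolic problems of \cite{AN}, and I would prove it directly by the Galerkin method combined with energy estimates. First I would reduce to a homogeneous Dirichlet condition on $\Gamma_u$. By the trace theorem there is a bounded extension operator producing $Ef\in H^3(0,T;H^1(\dom))$ with $(Ef)|_{\Gamma_u}=f$; setting $v=u-Ef$, the unknown $v(t,\cdot)$ lies in $H^1_{\Gamma_u}(\dom)$ for a.e.\ $t$, and $u$ solves the stated problem iff $v$ is a weak solution of $P_{\rho,\cc}v=\tilde F$ with homogeneous Dirichlet data on $\Gamma_u$, Neumann data $\tilde g=g-\pr_\cc(Ef)$ on $\Gamma_s$, and shifted initial data, where $\tilde F=F-\rho\pr_t^2(Ef)+L_\cc(Ef)$ is read as an element of $(H^1_{\Gamma_u}(\dom))'$. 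The weak formulation is: find $v$ with $v(t,\cdot)\in H^1_{\Gamma_u}(\dom)$ such that $\la\rho\pr_t^2 v,\phi\ra+a(v,\phi)=\la\tilde F,\phi\ra+\la\tilde g,\phi\ra_{\Gamma_s}$ for all $\phi\in H^1_{\Gamma_u}(\dom)$, with $a(v,\phi)=\int_\dom(\cc:Dv):D\phi\dd x$. The first point to establish is a G\aa rding inequality for $a$: the symmetries $C_{ijkl}=C_{ijlk}=C_{klij}$ mean only the symmetric gradient $\epsilon(v)$ enters, so strong convexity gives $a(v,v)\ge\lambda\|\epsilon(v)\|_{L^2}^2$, and Korn's inequality (available because $v$ vanishes on the set $\Gamma_u$ of positive surface measure, which admits no nontrivial rigid motion) upgrades this to $a(v,v)+C_0\|v\|_{L^2}^2\ge c\|v\|_{H^1}^2$.

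Next I would build Galerkin approximations $v_m=\sum_{k=1}^m d_k^m(t)w_k$ in a basis $\{w_k\}$ of $H^1_{\Gamma_u}(\dom)$, with the $d_k^m$ solving the resulting linear second-order ODE system for projected initial data; these exist on all of $[0,T]$ by ODE theory. The core is the $m$-uniform energy estimate obtained by testing with $\pr_t v_m$, which yields the identity $\frac{d}{dt}\big[\tfrac12\int_\dom\rho|\pr_t v_m|^2\dd x+\tfrac12 a(v_m,v_m)\big]=\la\tilde F,\pr_t v_m\ra+\la\tilde g,\pr_t v_m\ra_{\Gamma_s}$; the G\aa rding inequality and Gronwall's lemma then bound $\|v_m\|_{L^\infty(0,T;H^1)}+\|\pr_t v_m\|_{L^\infty(0,T;L^2)}$ by the data. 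The terms from the lifting, $\rho\pr_t^2(Ef)$ and $\pr_\cc(Ef)$, are precisely the reason for the high temporal regularity $f\in H^3(0,T;H^{1/2}(\Gamma_u))$: their pairings with $\pr_t v_m$ are integrated by parts in time so that no more than one time derivative falls on $v_m$, which forces control of up to $\pr_t^3(Ef)$. The compatibility condition $f(0,\cdot)=u_0|_{\Gamma_u}$ ensures $v|_{t=0}\in H^1_{\Gamma_u}(\dom)$, so the initial energy is finite and controlled by the data.

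From these bounds I would extract a weak-$*$ limit $v$ in the stated spaces; linearity lets it pass to the equation, and the initial and boundary conditions are recovered in the standard manner. The bound on $\pr_t^2 u$ needs no further energy estimate: reading the equation as $\la\rho\pr_t^2 u,\phi\ra=\la F,\phi\ra-a(u,\phi)+\la g,\phi\ra_{\Gamma_s}$ for $\phi\in H^1_{\Gamma_u}(\dom)$ and taking the supremum in $t$ directly gives $\pr_t^2u\in L^\infty(0,T;(H^1_{\Gamma_u}(\dom))')$ with the asserted norm bound. Uniqueness follows by applying the energy identity to the difference of two solutions (all data zero) to conclude $\pr_t v\equiv0$ and $\epsilon(v)\equiv0$, hence $v\equiv0$.

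I expect the principal obstacle to be the rigorous justification of the energy identity at exactly the regularity of the solution class. Since $\pr_t v$ is only in $L^\infty(0,T;L^2)$ while $\pr_t^2 v$ lives in the dual space, $\pr_t v$ is not an admissible test function and the pairing $\la\pr_t^2 v,\pr_t v\ra$ cannot be formed directly; one must run the estimate at the Galerkin level, where everything is smooth in $t$, and pass to the limit only afterwards, or employ a Steklov--Lions time-mollification as in Ladyzhenskaya's treatment of second-order hyperbolic equations. Together with the careful bookkeeping of the lifting terms that dictates the $H^3$-in-time hypothesis, this is where the real care lies; the remaining steps are routine.
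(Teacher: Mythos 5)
The paper does not actually prove Theorem \ref{thm-forward}: it is imported as the restriction to hyperbolic elasticity of the well-posedness result for piezoelectric systems in \cite{AN}, so there is no in-paper argument to compare against, and your self-contained Galerkin proof is the natural substitute. Your sketch is essentially correct and matches what the authors themselves indicate: the reduction to homogeneous Dirichlet data on $\Gamma_u$ via a lifting $Ef$, with the observation that the resulting source term $\rho\pr_t^2(Ef)$ must be $H^1$ in time and therefore forces $f\in H^3(0,T;H^{1/2}(\Gamma_u))$, is precisely the explanation given in the remark following the theorem; the Korn-inequality coercivity (valid because $\Gamma_u$ has positive surface measure) and the integration by parts in time of the pairings $\la F,\pr_t v_m\ra$ and $\la g,\pr_t v_m\ra_{\Gamma_s}$ --- needed since $\pr_t v_m$ is only $L^2$ in space and has no usable boundary trace --- correctly account for the $H^1$-in-time hypotheses on $F$ and $g$. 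One bookkeeping caution: splitting the lifted data into $\tilde F=F-\rho\pr_t^2(Ef)+L_\cc(Ef)$ \emph{and} $\tilde g=g-\pr_\cc(Ef)$ double counts, since for $Ef(t,\cdot)$ merely in $H^1(\dom)$ the conormal derivative $\pr_\cc(Ef)$ has no independent meaning; the two contributions should be absorbed into the single functional $\phi\mapsto -a(Ef,\phi)$ on $H^1_{\Gamma_u}(\dom)$. This changes nothing of substance. You also correctly flag the real technical difficulty --- justifying the energy identity, and hence uniqueness, at the limiting regularity where $\la\pr_t^2 v,\pr_t v\ra$ cannot be formed --- and the remedies you name (running the estimate at the Galerkin level, or the Lions--Ladyzhenskaya time-regularization/test-function device) are the standard and correct ones. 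With those details written out, your argument gives a complete proof of a statement the paper only quotes.
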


\begin{remark}One reason why the source term $F$ has higher regularity in time than is usually assumed is the fact that it is more singular in space. We will in fact at one point in the proof need to solve the equation with a source term that would have exactly this regularity. The Dirichlet data is required to have three derivatives in time because, in the course of the proof of the result, it is necessary to convert the given equation to one with zero Dirichlet data, but with an additional source term, which has to have $H^1$ in time regularity.
\end{remark}

We can define the local Neumann-to-Dirichlet and Dirichlet-to-Neumann maps, which contain the information that may be collected by applying various tractions to $\sig$ and measuring displacements on $\sig$, or by producing displacements and measuring tractions on $\sig$ as follows. The local Neumann-to-Dirichlet map (abbreviated as ``ND map'')
\begin{equation}
\Phi^{T,\sig}_{\rho,\cc}: H^1(0,T;\hbi(\sig))\to H^1(0,T;H^{1/2}(\sig))
\end{equation} 
is defined by
\begin{equation}
\Phi^{T,\sig}_{\rho,\cc}g=u|_{{(0,T)\times\sig}},\,\,\text{\rm with}\,\,{(0,T)\times\sig}=(0,T)\times\Sigma,
\end{equation}
where $u$ solves
\begin{equation}
\left\{\begin{array}{l}
P_{\rho,\cc}u=0\,\,\text{\rm in}\,\,\dom_T,\\[5pt]
\pr_\cc u|_{{(0,T)\times\sig}}=g,\\[5pt]
(u,\pr_t u)|_{t=0}=0.
\end{array}\right.
\end{equation}
The local Dirichlet-to-Neumann map (abbreviated as ``DN map'') 
\begin{equation}
\Lambda^{T,\sig}_{\rho,\cc}: H^3(0,T;\hb(\sig))\to H^1(0,T;H^{-1/2}(\sig))
\end{equation} 
is defined by
\begin{equation}
\Lambda^{T,\sig}_{\rho,\cc}f=\pr_\cc u|_{{(0,T)\times\sig}},
\end{equation}
where $u$ solves
\begin{equation}
\left\{\begin{array}{l}
P_{\rho,\cc}u=0\,\,\text{\rm in}\,\,\dom_T,\\[5pt]
u|_{{(0,T)\times\sig}}=f,\\[5pt]
(u,\pr_t u)|_{t=0}=0.
\end{array}\right.
\end{equation}

\subsection{Main results}

If  $\sig\subset\partial D_\alpha$ is open, we will say that $\sig$ is {\it curved} if it is $C^1$ and $\{\nu(x):x\in\sig\}\subset S^2$ contains the image of a non-constant continuous curve.

Suppose $\cc^{(I)}$, $\rho^{(I)}$ $I=1,2$, are two elastic tensors and two densities on $\dom$, all of which are homogeneous in common Lipschitz subdomains $D_\alpha$. Since each boundary $\partial D_\alpha$ of $D_\alpha$ can have the discontinuity of the density and elastic tensor, we also call each $\partial D_\alpha$ interface. We will consider a chain $D_{\alpha_i}$, $i=1,\ldots,N$ of these subdomains (which we will abbreviate as $D_i$) and nonempty open surfaces $\sig_i\subset\pr D_i$ such that $\sig=\sig_1\subset\gam\cap\pr D_1$, and $\sig_{i+1}\subset\bar D_i\cap\bar D_{i+1}$, $i=1,\ldots,N-1$.

\begin{thm}\label{main-thm}
Let $\cc^{(1)}$, $\cc^{(2)}$, $\rho^{(1)}$, $\rho^{(2)}$ be as above, assume that each $\sig_i$, $i=1,\ldots,N-1$ is curved in the sense given above, and $\pr(\pr\dom\cap\pr D_1)$, $\pr(\pr D_i\cap\pr D_{i+1})$, $i=1,\ldots,N-1$ are Lipschitz curves, then there exist times $0<T_1<\cdots<T_N<\infty$, such that if
\begin{equation}
\Lambda^{T_k,\sig}_{\rho^{(1)},\cc^{(1)}}=\Lambda^{T_k,\sig}_{\rho^{(2)},\cc^{(2)}},
\end{equation}
or if
\begin{equation}
\Phi^{T_k,\sig}_{\rho^{(1)},\cc^{(1)}}=\Phi^{T_k,\sig}_{\rho^{(2)},\cc^{(2)}},
\end{equation}
then
\begin{equation}
\rho^{(1)}|_{D_i}=\rho^{(2)}|_{D_i},\;\cc^{(1)}|_{D_i}=\cc^{(2)}|_{D_i},\quad i=1,\ldots,k.
\end{equation}
\end{thm}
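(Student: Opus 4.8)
The plan is to prove the result by induction on $k$, peeling off the homogeneous pieces one at a time along the chain and combining, at each layer, the boundary determination result quoted from \cite{CHN}, \cite{HNZ}, \cite{NTU} with an interior determination step carried out entirely by time-domain methods. By linearity I would reduce everything to the difference of the two configurations: fixing a common boundary source on $\sig$, I let $u^{(I)}$ solve the forward problem with coefficients $(\rho^{(I)},\cc^{(I)})$ and put $w=u^{(1)}-u^{(2)}$. Equality of the two localized maps means precisely that the Cauchy data---trace and traction---of $u^{(1)}$ and of $u^{(2)}$ coincide on $(0,T)\times\sig$. The argument then rests on two moves: a boundary move that converts vanishing Cauchy data on a curved interface into equality of the coefficients of the adjacent piece, and an interior move that transports vanishing Cauchy data across a layer whose coefficients are already known to agree, from one interface to the next. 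The Neumann-to-Dirichlet and Dirichlet-to-Neumann cases run in parallel, the mixed well-posedness of Theorem \ref{thm-forward} being what lets me set up the intermediate problems.

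First I would treat the base case $k=1$. Feeding the equality of the maps on $\sig_1=\sig$ into the quoted boundary determination, which through the finite-in-time Laplace transform is equivalent to knowing the symbol of the associated elliptic elastic Dirichlet-to-Neumann map, and using that $\sig_1$ is curved so that this symbol is known along a one-parameter family of normal directions $\{\nu(x):x\in\sig_1\}$, I recover the full anisotropic tensor and the density, giving $\cc^{(1)}|_{D_1}=\cc^{(2)}|_{D_1}$ and $\rho^{(1)}|_{D_1}=\rho^{(2)}|_{D_1}$. For the inductive step I assume the coefficients agree on $D_1,\dots,D_k$. Then $w$ satisfies $P_{\rho,\cc}w=0$ with common coefficients throughout $D_1\cup\cdots\cup D_k$ and, the coefficients there now being known equal, has vanishing Cauchy data on $\sig_1$. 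A unique continuation argument in the time domain forces $w$ to vanish on the portion of $D_1\cup\cdots\cup D_k$ inside the domain of influence of $\sig_1$ up to the current time; in particular the trace and traction of $w$ vanish on the next interface. In terms of the reduced domain $\dom\setminus(\overline{D_1}\cup\cdots\cup\overline{D_k})$ this says that the localized maps of the two configurations agree on $\sig_{k+1}$, up to a shorter time $T'$, and a further application of the boundary determination at the next interface gives equality of the coefficients on the piece just inside it. The induction is driven exactly by the curvedness of the successive interfaces $\sig_1,\dots,\sig_{N-1}$ postulated in the hypotheses, which supplies at each step the family of normal directions the boundary determination needs.

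The bookkeeping of observation times is governed by finite speed of propagation. For information to reach $D_{k+1}$ and return to $\sig$, an elastic wave must cross $D_1,\dots,D_k$ and come back; since the slowest wave speed is bounded below in terms of $\lambda$ and $\|\rho\|_{L^\infty(\dom)}$, each crossing costs a definite amount of time, and I would choose $T_{k+1}$ to exceed $T_k$ by at least twice that amount, so that the reduced time $T'$ produced at each step stays positive. This is the source of the feature emphasized in the introduction: short observation times already pin down the coefficients near $\sig$, while deeper pieces are reached only for longer times. As the chain is finite, $T_N$ is finite.

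I expect the interior move to be the main obstacle. For the scalar wave equation one has Tataru's sharp unique continuation across noncharacteristic surfaces, but $P_{\rho,\cc}$ is a genuinely coupled $3\times 3$ system with several direction-dependent wave speeds, and unique continuation for such systems is far more delicate; this is where the hyperbolic techniques inspired by \cite{KOP} are needed. Two further issues demand care. The construction must remain localized in space and time, so that only the data encoded in $\Lambda^{T_k,\sig}_{\rho^{(I)},\cc^{(I)}}$, respectively $\Phi^{T_k,\sig}_{\rho^{(I)},\cc^{(I)}}$, is ever used, rather than global boundary data. And the edges of the interfaces must be handled: the hypothesis that $\pr(\pr\dom\cap\pr D_1)$ and the $\pr(\pr D_i\cap\pr D_{i+1})$ are Lipschitz curves is what makes the trace operations on, and the unique continuation up to, the corners of the successive reduced domains admissible.
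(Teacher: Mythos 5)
Your overall architecture (boundary determination on a curved surface via the finite-time Laplace transform, then an induction peeling off one homogeneous piece at a time, with $T_{k+1}-T_k$ controlled by a travel time through $D_k$) matches the paper, and your base case and time bookkeeping are essentially the paper's. But your interior move has a genuine gap. You pass from ``for every source $f$ on $\sig$, unique continuation makes the difference $w=u^{(1)}-u^{(2)}$ vanish up to the next interface, so the Cauchy data of $u^{(1)}$ and $u^{(2)}$ agree on $\sig_{k+1}$'' directly to ``the localized maps of the two configurations agree on $\sig_{k+1}$.'' These are not the same statement. What you obtain is that the two Dirichlet-to-Neumann graphs over $\sig_{k+1}$ agree \emph{on the set of traces $u^{(I)}|_{\sig_{k+1}}$ that are reachable from sources on $\sig$}. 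The localized DN (or ND) map on $\sig_{k+1}$ for the reduced domain is defined on all admissible data supported on $\overline{\sig_{k+1}}$, and you give no argument that the reachable traces are dense there. Closing this by approximate controllability/Runge approximation would require unique continuation for the anisotropic system across the \emph{unknown, inhomogeneous} part of the domain, which is not available and is precisely what the whole paper is structured to avoid. You also partly misidentify the obstacle: the unique continuation you worry about is only ever invoked inside a single homogeneous piece (Proposition \ref{ucp}, quoted from \cite{ET}), where it holds; the real difficulty is the surjectivity/density issue above.

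The paper's route around this is different and is the substantive content of Section \ref{inner}. Rather than propagating the vanishing of $w$, it shows that $\Lambda^{T,\sig}_{\rho,\cc}$ together with the already-determined $\rho|_D$, $\cc|_D$ determines the solution operator for \emph{interior sources} supported in $D$ near $\sig_2$ (Lemma \ref{green-determination}, via a time-reversal/adjoint argument and two applications of Proposition \ref{ucp}, each costing $\delta$), hence determines the single-layer operator $\mathscr{L}^{T-2\delta}_D$. The identity $(\Lambda^{T}_{\sig_2}-\Lambda^{T,+}_{\sig_2})\mathscr{L}^{T}_D f=f$ of Lemma \ref{lem-surjective-D} then exhibits the difference of DN maps across $\sig_2$ as the inverse of a known operator, so the full localized map on $\sig_2$ is recovered with no density argument needed (the ND case needs the extra surjectivity input of Lemma \ref{phi-surjective}). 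Without some substitute for this step --- or a proof that traces on $\sig_{k+1}$ of solutions launched from $\sig$ are dense --- your induction does not close.
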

\begin{remark}
The condition that $\pr(\pr\dom\cap\pr D_1)$, $\pr(\pr D_i\cap\pr D_{i+1})$, $i=1,\ldots,N-1$ are Lipschitz curves is in fact not necessary. Given a chain of domains that satisfies the other assumptions, we can pick a smooth curve that connects a point in $\sig$ to a point inside $D_N$ and crosses each interface $\pr D_i$ transversely. The intersection of the original chain of domains with an appropriately chosen tubular neighborhood of this curve would satisfy this extra condition.
\end{remark}
\begin{remark}
Note that in the case when $\rho|_{D_i}$, $\cc|_{D_i}$ are known for some $i$, then we do not need to assume that $\gam_i$ is curved. This would permit our result to apply, for example, to measurements of elastic waves taken on the surface of the Earth, which is locally very close to flat, as long as the properties of the top layer of the ground are known by other means. If the underground regions whose elastic properties are unknown can be reached by passing through a number of interfaces that do have curved portions, then the result still holds.
\end{remark}

\bigskip
Suppose $\cc^{(I)}$, $\rho^{(I)}$ $I=1,2$, are two elastic tensors and two densities on $\dom$, all of which are homogeneous on Lipschitz subdomains $D_\alpha^{(I)}$. Suppose further that there is  a  ``region of interest'' $R\subset\dom$, $\sig\subset\pr R$, such  that any $D_\alpha^{(I)}\cap R$ is sub-analytic.\footnote{For a definition and summary of properties of sub-analytic sets, see Appendix \ref{sas}} 

\begin{thm}\label{thm-interest}
Let $\cc^{(1)}$, $\cc^{(2)}$, $\rho^{(1)}$, $\rho^{(2)}$, be as above, and assume that $\sig$ is curved and that any boundary $(\pr D_\alpha^{(I)}\cap R)\setminus\partial R$  is curved on all its smooth components. Then there exists a time $0<T<\infty$ so that if 
\begin{equation}
\Lambda^{T,\sig}_{\rho^{(1)},\cc^{(1)}}=\Lambda^{T,\sig}_{\rho^{(2)},\cc^{(2)}},
\end{equation}
or if 
\begin{equation}
\Phi^{T,\sig}_{\rho^{(1)},\cc^{(1)}}=\Phi^{T,\sig}_{\rho^{(2)},\cc^{(2)}},
\end{equation}
then
\begin{equation}
\rho^{(1)}|_{R}=\rho^{(2)}|_{R},\;\cc^{(1)}|_{R}=\cc^{(2)}|_{R}.
\end{equation}
\end{thm}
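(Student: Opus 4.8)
The plan is to reduce Theorem~\ref{thm-interest} to an iterated application of the boundary determination result reviewed in Section~\ref{boundary} and the interior determination result of Section~\ref{inner}, organized as a peeling argument through the common refinement of the two \emph{a priori} distinct partitions. First I would form the union $\mathcal I$ of all interfaces $(\pr D_\alpha^{(I)}\cap R)\setminus\pr R$, $I=1,2$. Since every $D_\alpha^{(I)}\cap R$ is sub-analytic, $\mathcal I$ is a sub-analytic set of dimension at most two, and by the finiteness properties recalled in Appendix~\ref{sas} the open set $R\setminus\mathcal I$ has finitely many connected components $E_1,\ldots,E_M$. On each such cell both pairs $(\cc^{(1)},\rho^{(1)})$ and $(\cc^{(2)},\rho^{(2)})$ are constant, so it suffices to show that these two sets of constants coincide on every $E_m$, which then gives $\cc^{(1)}|_R=\cc^{(2)}|_R$ and $\rho^{(1)}|_R=\rho^{(2)}|_R$ up to the measure-zero set $\mathcal I$.

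Next I would order the cells for peeling. Using the connectedness of $R$ together with the stratification of $\mathcal I$, one builds a sequence in which $E_1$ is adjacent to $\sig$ and each later $E_m$ shares a genuinely two-dimensional piece of $\mathcal I$ with the already-determined region $\dom_{m-1}:=\sig\cup E_1\cup\cdots\cup E_{m-1}$; by hypothesis every smooth component of $\mathcal I$ is curved, so this shared piece contains a curved interface $\gam_m\subset\pr(\dom\setminus\dom_{m-1})$. The induction then proceeds as follows. For the base case, since $\sig$ is curved, boundary determination applied to the coinciding maps $\Lambda^{T,\sig}_{\rho^{(1)},\cc^{(1)}}=\Lambda^{T,\sig}_{\rho^{(2)},\cc^{(2)}}$ (or to the Neumann-to-Dirichlet analogue) recovers the interior limits of the coefficients at $\sig$, which by homogeneity are the constants on $E_1$; equality of the maps forces these to agree. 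Assuming agreement on $\dom_{m-1}$, I would use interior determination to transport the equality of the maps on $\sig$ to equality of the reduced local maps on the curved face $\gam_m$, and then apply boundary determination at $\gam_m$ to conclude agreement of the coefficients on $E_m$. Since there are finitely many cells and $R$ is bounded, the finite speed of propagation means that a single finite observation time $T$, chosen large enough for the waves to reach the deepest cell and return, suffices for every step; this fixes the time in the statement.

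The step I expect to be the main obstacle is the construction of the peeling order when the two partitions differ. One must guarantee that every cell of the common refinement can be attached to the previously treated region across a two-dimensional curved face to which both the interior and the boundary determination results genuinely apply, rather than across a lower-dimensional junction where the two interface systems cross. Showing that this degenerate locus has dimension at most one and therefore cannot disconnect $R$ or obstruct reachability is exactly where the theory of sub-analytic sets (finiteness of components, stratification, and the curve selection lemma summarized in Appendix~\ref{sas}) is needed. A secondary delicate point is to verify that the interior determination result remains valid at an interface $\gam_m$ that may belong simultaneously to $\pr D_\alpha^{(1)}$ and $\pr D_\beta^{(2)}$, so that the reduced domain $\dom\setminus\dom_{m-1}$ and its coefficients are well defined and identical for both problems along $\gam_m$.
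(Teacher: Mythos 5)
Your overall strategy --- pass to the common refinement $\{D_\alpha^{(1)}\cap D_\beta^{(2)}\}$, on whose cells all four coefficients are constant, and then alternate boundary determination (Proposition \ref{boundary-prop}) with interior determination (Proposition \ref{interior-prop}) to work inward from $\sig$ --- is the same as the paper's. The gap is exactly at the step you yourself flag as the main obstacle: the existence of a global peeling order in which every cell $E_m$ of the refinement attaches to the previously treated region across a two-dimensional curved face, with all the regularity needed to run the two determination results. You do not construct this order, and the difficulties are real: the cells of the common refinement of two sub-analytic partitions need not be Lipschitz domains, the interfaces between adjacent cells need not have Lipschitz boundary curves (as required in Theorem \ref{main-thm} and in the forward theory underlying Proposition \ref{interior-prop}), and a cell could a priori be reachable from the already-treated region only through lower-dimensional junctions. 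Asserting that ``one builds a sequence'' using stratification does not discharge these points, and the curve selection lemma alone will not produce the required two-dimensional curved attaching faces.

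The paper avoids the global peeling entirely by localizing: for each point $P\in R$ it chooses a smooth curve $\omega$ from $\sig$ to $P$ meeting the interfaces of the common refinement only transversally and at smooth points, and intersects the refinement with a thin tubular neighborhood $V_\epsilon$ of $\omega$. Inside the tube the pieces $D'_\gamma=\tilde D_\gamma\cap V_\epsilon$ are automatically Lipschitz, they form a linearly ordered chain along $\omega$, and the successive interfaces are portions of smooth components of the original interfaces, hence curved by hypothesis; Theorem \ref{main-thm} then applies verbatim to this chain and yields a time $T_P$, and $T=\max_{P\in R}T_P$ finishes the proof. If you want to salvage your argument, the cleanest repair is to replace the global peeling by this per-point tube construction, which is also where the sub-analyticity (local finiteness and triangulation, Appendix \ref{sas}) actually enters.
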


\section{Boundary determination}\label{boundary}

In this section we follow \cite{HNZ} to show that the DN map $\Lambda^{T,\sig}_{\rho,\cc}$, or the ND map $\Phi^{T,\sig}_{\rho,\cc}$, determines the values of $\rho|_\sig$ and $C|_\sig$. More precisely we will show the following.
\begin{prop}[see {\cite[Theorem 5.4]{HNZ}}]\label{boundary-prop}
If $\sig$ is curved, $0<T<\infty$, and $\Lambda^{T,\sig}_{\rho^{(1)},\cc^{(1)}}=\Lambda^{T,\sig}_{\rho^{(2)},\cc^{(2)}}$ or $\Phi^{T,\sig}_{\rho^{(1)},\cc^{(1)}}=\Phi^{T,\sig}_{\rho^{(2)},\cc^{(2)}}$, then $\rho^{(1)}|_{D_1}=\rho^{(2)}|_{D_1}$ and $\cc^{(1)}|_{D_1}=\cc^{(2)}|_{D_1}$.
\end{prop}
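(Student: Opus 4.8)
The plan is to reduce the hyperbolic boundary-determination statement to an elliptic one via a finite-time Laplace transform, and then invoke the known boundary uniqueness result for the elliptic Dirichlet-to-Neumann map. First I would fix a large parameter $s>0$ and, given boundary data $f$ (or $g$), apply the truncated Laplace transform $u\mapsto \hat u_s=\int_0^T e^{-st}u(t,\cdot)\dd t$ to a solution $u$ of the hyperbolic problem $P_{\rho,\cc}u=0$ with zero initial Cauchy data. Because $(u,\pr_t u)|_{t=0}=0$, the time-derivative terms transform cleanly, and $\hat u_s$ satisfies an elliptic equation of the form $(L_\cc - s^2\rho)\hat u_s = r_{s,T}$ in $\dom$, where the remainder $r_{s,T}$ collects the boundary contributions at $t=T$ arising from the finite integration limit. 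The key analytic point I would establish is that, as $s\to\infty$, this remainder is exponentially small (of size $O(e^{-sT})$) relative to the leading terms, by the standard energy estimates of Theorem \ref{thm-forward} together with finite speed of propagation; this is what lets the truncated transform behave, to all relevant orders in $s$, like the elliptic problem with spectral parameter $s^2$.

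Next, I would observe that the equality of the hyperbolic maps $\Lambda^{T,\sig}_{\rho^{(1)},\cc^{(1)}}=\Lambda^{T,\sig}_{\rho^{(2)},\cc^{(2)}}$ (respectively the ND maps), after transformation, yields equality of the corresponding localized elliptic DN (resp. ND) maps on $\sig$ for the operator $L_\cc - s^2\rho$, modulo the exponentially small error. Since the two transformed maps agree for a whole interval of values of $s$ (indeed for all large $s$), and the dependence on $s$ is analytic, the error terms can be separated from the genuine symbol, and one recovers equality of the full symbol of the elliptic DN map at $\sig$ in the high-frequency regime. This is the step that realizes the ``finite in time Laplace transform'' relating the time-domain map to the symbol of the elliptic DN map described in the introduction.

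Finally, I would apply the elliptic boundary-determination machinery of \cite{NTU}, \cite{NU}, \cite{NU-2}, as packaged for this setting in \cite{CHN}, \cite{HNZ}: from the symbol of the localized elliptic DN (or ND) map on a curved piece $\sig$ one reconstructs the full Taylor jet of $\cc$ and $\rho$ at the boundary, and in particular their boundary values. Because $\cc$ and $\rho$ are \emph{constant} on $D_1$ by the piecewise-homogeneity assumption, determining the boundary values on $\sig\subset\gam\cap\pr D_1$ determines them throughout $D_1$, giving $\rho^{(1)}|_{D_1}=\rho^{(2)}|_{D_1}$ and $\cc^{(1)}|_{D_1}=\cc^{(2)}|_{D_1}$. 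The main obstacle I anticipate is the curvature hypothesis: the elliptic symbol calculus recovers the full anisotropic tensor from the DN symbol only when the normal direction $\nu$ sweeps out a genuine curve on $S^2$ as $x$ ranges over $\sig$, since a single normal direction sees only a partial contraction of $C_{ijkl}$; the condition that $\sig$ be \emph{curved} is precisely what supplies enough independent normal directions to invert for all components of the tensor. Controlling the exponentially small Laplace-transform remainder uniformly in the relevant frequencies, so that it does not contaminate this symbol inversion, is the other delicate point.
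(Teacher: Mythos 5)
Your proposal follows essentially the same route as the paper's proof: a finite-time Laplace transform with large parameter $s=1/h$, an $O(e^{-T/h})$ remainder estimate that is beaten by every power of $h$ so the full symbol of the semiclassical elliptic DN (or ND) map is recovered, and then the elliptic boundary-determination results of \cite{CHN}, \cite{HNZ}, \cite{NTU} together with the curvedness of $\sig$ and piecewise homogeneity to conclude on $D_1$. The only cosmetic difference is that the paper implements the compatibility at $t=0$ by taking boundary data of the form $t^2\psi$ and splitting $u=t^2u_0+u_1$, a device your sketch omits but which does not change the argument.
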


\begin{proof}
We will sketch the argument in the DN map and the ND map cases separately.

\paragraph{The DN map case:}
This is proved in \cite{HNZ}. Here we will repeat enough of the argument to give the reader an idea of how it works, but we will not reproduce it in full. The main idea is to use a finite-time Laplace transform in order to convert the hyperbolic elasticity boundary value problem to an elliptic boundary value problem. The boundary determination will then follow from the results on the elliptic case proven in \cite{CHN}, \cite{HNZ}.

Suppose $\cc$ and $\rho$ are of the kind we are considering. Let $\psi\in\hb(\sig)$ and let  $u$  be a solution of
\begin{equation}
\left\{\begin{array}{l}
P_{\rho,\cc} u=0\,\,\text{\rm in}\,\,\dom_T,\\[5pt]
 u|_{(0,T)\times\gam}=t^2\psi,\\[5pt]
  (u, \pr_t u)|_{t=0}=0.
\end{array}\right.
\end{equation}
Also for $\phi\in\hb(\sig)$, we will consider the elliptic boundary value problem 
\begin{equation}\label{elliptic}
\left\{\begin{array}{l}
\rho v-h^2L_\cc v=0\,\,\text{\rm in}\,\,\dom,\\[5pt]
v|_{\gam}=\phi
\end{array}\right.
\end{equation}
depending on a parameter $h>0$.
Let
\begin{equation}
\tilde\Lambda^{h,\sig}_{\rho,\cc}( \phi)=h\pr_\cc v|_{\sig}
\end{equation}
be the associated DN map.

We are interested in comparing $v(\cdot,h)$ to the finite-time Laplace transform 
\begin{equation}
(\mathcal{L}_T u)(\cdot,h) := \int_0^T u(t,\cdot)e^{-\frac{t}{h}}\dd t,
\end{equation}
 where the Dirichlet data for $v$ is chosen so that 
\begin{equation}\label{def-phi}
\phi=\psi\int_0^T t^2e^{-\frac{t}{h}}\dd t.
\end{equation}

Let $u_0\in H^1(\dom)$ be the solution of
\begin{equation}
\left\{\begin{array}{l}
L_\cc u_0=0\,\,\text{\rm in}\,\,\dom,\\[5pt]
u_0|_{\gam}=\psi,
\end{array}\right.
\end{equation}
and define $u_1$ so that
\begin{equation}
u(t,x)=t^2u_0(x)+u_1(t,x),\quad (t,x)\in \dom_T.
\end{equation}
Then $u_1$ should satisfy
\begin{equation}
\left\{\begin{array}{l}
P_{\rho,\cc}u_1=-2\rho u_0\,\,\text{\rm in}\,\,\dom_T\\[5pt]
 u_1|_{(0,T)\times\gam}=0,\\[5pt]
  (u_1, \pr_t u_1)|_{t=0}=0.
\end{array}\right.
\end{equation}
It is known that by the Korn inequality, there exists a unique solution such that
\begin{equation}
u_1\in C([0,T];H^1(\dom))\cap C^1([0,T]; L^2(\dom))
\end{equation}
and
\begin{multline}
||u_1(t)||_{H^1(\dom)}+||\pr_t u_1(t)||_{L^2(\dom)}\\[5pt]\leq C||2\rho u_0||_{L^2(\dom_T)}
\leq C||\psi||_{H^{1/2}(\dom)},\,\,t\in[0,T]
\end{multline}
(see \cite{Wloka}).

\begin{comment}
Define
\begin{equation}
w(\cdot,h)=(\mathcal{L}_T u_1)(\cdot,h)=\int_0^Tu_1(\cdot,t)e^{-\frac{t}{h}}\dd t
\end{equation}
and notice that
\begin{equation}
\int_0^T\pr_t^2 u_1(\cdot,t)e^{-\frac{t}{h}}\dd t=h^{-2} w(\cdot, \tau)+ e^{-\frac{T}{h}}\left(\pr_tu_1(\cdot,T)+h^{-1} u_1(\cdot, T)\right).
\end{equation}
Let
\begin{multline}
F(\cdot,h)=-2h^3\rho(\cdot)u_0(\cdot)(1-e^{-\frac{T}{h}})\\-
h^2e^{-\frac{T}{h}}\left(\pr_tu_1(\cdot,T)+h^{-1} u_1(\cdot, T)\right)\in H^{-1}(\dom).
\end{multline}
Then $w$ is a solution of 
\begin{equation}
\left\{\begin{array}{l}
\rho w-h^2L_\cc w=F\,\,\text{\rm in}\,\,\Omega,\\[5pt]
w|_{\pr\dom}=0.
\end{array}\right.
\end{equation}
\end{comment}

Let
\begin{equation}
r(\cdot,h)=v(\cdot,h)-\int_0^Tu(t,\cdot)e^{-\frac{t}{h}}\dd t.
\end{equation}
\begin{comment}
If we also introduce
\begin{equation}
z(\cdot,h)=v(\cdot,h)-u_0(\cdot)\int_0^Tt^2e^{-\frac{t}{h}}\dd t,
\end{equation}
then
\begin{equation}
r(\cdot,h)=z(\cdot,h)-w(\cdot,h).
\end{equation}
Clearly, $z$ satisfies
\begin{equation}
\left\{\begin{array}{l}
\rho z-h^2L_\cc z = -\rho u_0\int_0^Tt^2e^{-\frac{t}{h}}\dd t\,\,\text{\rm in}\,\,\Omega,\\[5pt]
z|_{\pr\dom}=0.
\end{array}\right.
\end{equation}
Noting that
\begin{equation}
\int_0^Tt^2e^{-\frac{t}{h}}\dd t=\left[ 2h^3(1-e^{-\frac{T}{h}})-(T^2h+2Th^2)e^{-\frac{T}{h}}  \right],
\end{equation}
\end{comment}
An elementary computation shows that $r$ satisfies
\begin{equation}
\left\{
\begin{array}{l}
h^{-2}\rho r -L_\cc r = e^{-\frac{T}{h}}\left[\pr_t u_1(T)+h^{-1} u_1(T)
+\rho u_0(T^2h^{-1}+2T)\right]\,\,\text{\rm in}\,\,\Omega,\\[5pt]
r|_{\gam}=0.
\end{array}
\right.
\end{equation}
Let $\tilde T=\max(1,T)$.
By the standard elliptic estimates it follows that
\begin{equation}
||r||_{H^1(\dom)}\leq Ch^{-1}\tilde T^2e^{-\frac{T}{h}}||\psi||_{\hb(\sig)},
\end{equation}
for $0<h<1$, and with a constant $C>0$ independent on $T$ or $h$.

From \eqref{def-phi} it follows that $t^2\phi=\chi\psi$, where
\begin{equation}
\chi(t,T,h)=t^2\left(\int_0^T s^2 e^{-\frac{s}{h}}\dd s\right)^{-1}.
\end{equation}
It is easy to see that there exists $h_0>0$ so that if $0<h<h_0$, then
\begin{equation}
\chi(t,T,h)<CT^2h^{-3},\quad 0 \leq t\leq T,
\end{equation}
where the constant $C>0$ again is independent of $T$ or $h$. 
We can conclude that
\begin{equation}
||\tilde\Lambda^{h,\sig}_{\rho,\cc} \phi-h\mathcal{L}_T\Lambda^{T,\sig}_{\rho,\cc}(\chi\phi)||_{H^{-1/2}(\sig)}\leq
C\tilde T^4 h^{-3} e^{-\frac{T}{h}}||\phi||_{\hb(\sig)},
\end{equation}
or
\begin{equation}
||\tilde\Lambda^{h,\sig}_{\rho,\cc} -h\mathcal{L}_T\Lambda^{T,\sig}_{\rho,\cc}\chi||_{\hb(\sig)\to H^{-1/2}(\sig)}\leq
C\tilde T^4 h^{-3} e^{-\frac{T}{h}},
\end{equation}
where the constant $C>0$ is independent of $T$ or $h$.

Considering $\tilde\Lambda^{h,\sig}_{\rho,C}$ as a semiclassical pseudodifferential operator with the small parameter $h$, it follows that its full symbol can be obtained from $\Lambda^{T,\sig}_{\rho,C}$ (but not necessarily the operator itself).

It is shown in \cite{HNZ} (see their Theorem 4.2) that the principal symbol of $\tilde\Lambda^{h,\sig}_{\rho,C}$ determines $\Gamma(x,h),\,x\perp\nu$, where $\Gamma(x,h)$ is the fundamental solution of $\rho-h^2L_C$ associated to the pair $\rho|_{D_1}$ and $C|_{D_1}$ whose Fourier transform with respect to $x$ with $x\perp\nu$ is bounded as $h\rightarrow 0$. See also \cite{NTU}, \cite{T}, for similar results. Once having this, it can be shown that by using $\sig$ is curved, we can recover $\rho|_{D_1}$ and $C|_{D_1}$. This is shown in \cite[Apendix B]{HNZ}.

\paragraph{The ND map case:}
The method of proof is almost entirely analogous to the DN map case. We will give only a brief sketch of its argument. 

Let $\psi\in\hbi(\sig)$ and let  $u$  be a solution of
\begin{equation}
\left\{\begin{array}{l}
P_{\rho,\cc} u=0\,\,\text{\rm in}\,\,\dom_T,\\[5pt]
 \pr_\cc u|_{(0,T)\times\gam}=t^2\psi,\\[5pt]
  (u, \pr_t u)|_{t=0}=0.
\end{array}\right.
\end{equation}
For $\phi\in\hbi(\sig)$ consider the elliptic boundary value problem 
\begin{equation}\label{elliptic-N}
\left\{\begin{array}{l}
\rho v-h^2L_\cc v=0\,\,\text{\rm in}\,\,\dom,\\[5pt]
\pr_\cc v|_{\gam}=\phi
\end{array}\right.
\end{equation}
depending on a parameter $h>0$.
Let
\begin{equation}
\tilde\Phi^{h,\sig}_{\rho,\cc} \phi=h^{-1}v|_{\sig}
\end{equation}
be the associated ND map.

We will choose the Neumann data $\phi$ as
\begin{equation}
\phi=\psi\int_0^T t^2e^{-\frac{t}{h}}\dd t
\end{equation}
and let $u_0\in H^1(\dom)$ be the solution of
\begin{equation}
\left\{\begin{array}{l}
L_\cc u_0=0\,\,\text{\rm in }\,\,\dom,\\[5pt]
\pr_\cc u_0|_{\gam}=\psi.
\end{array}\right.
\end{equation}
Likewise before for the DN map case, we define $u_1$ so that
\begin{equation}
u(t,x)=t^2u_0(x)+u_1(t,x),\quad (t,x)\in \dom_T.
\end{equation}
Then $u_1$ should satisfy
\begin{equation}
\left\{\begin{array}{l}
P_{\rho,\cc}u_1=-2\rho u_0\,\,\text{\rm in }\,\,\dom_T\\[5pt]
 \pr_\cc u_1|_{(0,T)\times\gam}=0,\\[5pt]
  (u_1, \pr_t u_1)|_{t=0}=0.
\end{array}\right.
\end{equation}
By the Korn inequality, this equation has a unique solution such that
\begin{equation}
u_1\in C([0,T];H^1(\dom))\cap C^1([0,T]; L^2(\dom))
\end{equation}
and
\begin{multline}
||u_1(t)||_{H^1(\dom)}+||\pr_t u_1(t)||_{L^2(\dom)}\\[5pt]
\leq C||2\rho u_0||_{L^2(\dom_T)}\leq C||\psi||_{H^{-1/2}(\dom)},\,\,t\in[0,T]
\end{multline}
(see \cite{Wloka}).

\begin{comment}
Define
\begin{equation}
w(\cdot,h)=\mathcal{L}_Tu_1=\int_0^Tu_1(\cdot,t)e^{-\frac{t}{h}}\dd t
\end{equation}
and let
\begin{multline}
F(\cdot,h)=-2h^3\rho(\cdot)u_0(\cdot)(1-e^{\frac{T}{h}})\\-
h^2e^{-\frac{T}{h}}\left(\pr_tu_1(\cdot,T)+h^{-1} u_1(\cdot, T)\right)\in H^{-1}(\dom).
\end{multline}
Then $w$ is a solution of 
\begin{equation}
\left\{\begin{array}{l}
\rho w-h^2L_\cc w=F,\\[5pt]
\pr_\cc w|_{\pr\dom}=0.
\end{array}\right.
\end{equation}
\end{comment}

Now let
\begin{equation}
r(\cdot,h)=v(\cdot,h)-\int_0^Tu(t,\cdot)e^{-\frac{t}{h}}\dd t.
\end{equation}
\begin{comment}
and
\begin{equation}
z(\cdot,h)=v(\cdot,h)-u_0(\cdot)\int_0^Tt^2e^{-\frac{t}{h}}\dd t.
\end{equation}
Then
\begin{equation}
r(\cdot,h)=z(\cdot,h)-w(\cdot,h).
\end{equation}
Since $z$ satisfies
\begin{equation}
\left\{\begin{array}{l}
\rho z-h^2L_\cc z = -\rho u_0\int_0^Tt^2e^{-\frac{t}{h}}\dd t\,\,\text{\rm in}\,\,\Omega,\\[5pt]
\pr_\cc z|_{\pr\dom}=0,
\end{array}\right.
\end{equation}
\end{comment}
It satisfies
\begin{equation}
\left\{\begin{array}{l}
h^{-2}\rho r -L_\cc z = e^{-\frac{T}{h}}\left[\pr_tu_1(T)+h^{-1} u_1(T)
+\rho u_0(T^2h^{-1}+2T)\right]\,\,\text{\rm in}\,\,\Omega,\\[5pt]
\pr_\cc r|_{\gam}=0.
\end{array}\right.
\end{equation}
Let $\tilde T=\max(1,T)$.
By the standard elliptic estimates it follows that
\begin{equation}
||r||_{H^1(\dom)}\leq Ch^{-1}\tilde T^2e^{-\frac{T}{h}}||\psi||_{\hbi(\sig)},
\end{equation}
for $0<h<1$ and with a constant $C$ independent on $T$ or $h$.

Similarly to the DN case, we can conclude that
\begin{equation}
||\tilde\Phi^{h,\sig}_{\rho,\cc} \phi-h^{-1}\mathcal{L}_T\Phi^{T,\sig}_{\rho,\cc}(\chi\phi)||_{H^{1/2}(\sig)}\leq
C\tilde T^4 h^{-5} e^{-\frac{T}{h}}||\phi||_{\hbi(\sig)},
\end{equation}
or
\begin{equation}
||\tilde\Phi^{h,\sig}_{\rho,\cc} -h^{-1}\mathcal{L}_T\Phi^{T,\sig}_{\rho,\cc}\chi||_{\hbi(\sig)\to H^{1/2}(\sig)}\leq
C\tilde T^4 h^{-5} e^{-\frac{T}{h}},
\end{equation}
where the constant $C$ is independent of $T$ or $h$.

As above, we can obtain the symbol of $\tilde\Phi^{h,\sig}_{\rho,\cc}$ from $\Phi^{T,\sig}_{\rho,\cc}$. The principal symbol of $\tilde\Lambda^{h,\sig}_{\rho,\cc}$ is the inverse of the symbol of $\tilde\Phi^{h,\sig}_{\rho,\cc}$. We can therefore conclude as above that the local ND map determines the elastic tensor and density at the boundary.
\end{proof}

\section{Interior determination}\label{inner}

For the purposes of this section, $\dom$ will be a domain in $\R^3$, $D\subset\dom$ a subdomain, $\sig\subset \pr D\cap\pr\dom\neq\emptyset$. Let $\dom_2=\dom\setminus \overline D$ and $\sig_2=\pr\dom_2\setminus\pr\dom$. Suppose $\cc$ and $\rho$ are homogeneous in $D$ and let $\Lambda^T_{\sig_2}$ be the DN map for the domain $\dom_2$ with data on $\sig_2$ and $\Phi^T_{\sig_2}$ be the similarly defined ND map. We will prove the following proposition.
\begin{prop}\label{interior-prop}
There exists some $0<\delta<\infty$ depending on $D$, $\cc|_{D}$, $\rho|_{D}$ such that
\begin{itemize}
    \item[(i)] $\Lambda^{T,\sig}_{\rho,\cc}$ determines $\Lambda^{T-2\delta}_{\sig_2}$, \item[(ii)] $\Phi^{T,\sig}_{\rho,\cc}$ determines $\Phi^{T-2\delta}_{\sig_2}$.
\end{itemize}
\end{prop}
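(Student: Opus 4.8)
The plan is to show that knowing the boundary map on $\sig\subset\pr D\cap\pr\dom$ lets us recover, after losing a time $2\delta$ equal to the travel time across $D$, the boundary map on the internal surface $\sig_2=\pr\dom_2\setminus\pr\dom$. The governing principle is finite speed of propagation for the hyperbolic operator $P_{\rho,\cc}$: since $\cc$, $\rho$ are homogeneous on $D$, there is a finite maximal wave speed $c_D$, and I set $\delta$ to be (slightly more than) the supremum over $x\in D$ of the travel time from $\sig$ to $x$ and from $x$ to $\sig_2$, so that any signal emanating from $\sig$ needs time at most $\delta$ to cross $D$ and reach $\sig_2$, and likewise a disturbance on $\sig_2$ needs time $\delta$ before it can be felt on $\sig$. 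This is the quantity that will depend only on $D$, $\cc|_D$, $\rho|_D$, as claimed.

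For part (ii), the ND case, I would argue as follows. Fix interior Neumann data $g$ supported in $(0,T-2\delta)\times\sig_2$ and let $w$ solve $P_{\rho,\cc}w=0$ in $\dom_{2,T}$ with $\pr_\cc w=g$ on $\sig_2$, zero traction on the rest of $\pr\dom_2\cap\pr\dom$, and zero initial data; the object to be determined is $\Phi^{T-2\delta}_{\sig_2}g=w|_{\sig_2}$. The idea is to realize $w$ as the restriction to $\dom_2$ of a solution $u$ on the full domain $\dom$ whose traction on $\sig$ is something I can actually apply and whose Cauchy data I can read off. The natural route is to use the Runge/density approximation property together with finite speed of propagation: approximate the field generated by $g$ by fields generated from $\sig$. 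Concretely, one shows that traces on $\sig_2$ of solutions with data on $\sig$ are dense (in the appropriate energy space, up to the time loss) in the traces of solutions with data on $\sig_2$, so that the action of $\Phi^{T-2\delta}_{\sig_2}$ on $g$ can be computed from $\Phi^{T,\sig}_{\rho,\cc}$ by an approximation/limiting procedure. The key mechanism making this work is that, because $\cc,\rho$ are known on $D$, the solution on $D$ is completely determined by its Cauchy data on $\sig_2$, and conversely the data that must be imposed on $\sig$ to produce a prescribed field on $\sig_2$ can be computed by solving the (known-coefficient) equation on $D$ as a lateral/sideways propagation problem; the time shift by $\delta$ on each side accounts exactly for the transit across $D$ in this forward-and-back transmission. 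The DN case (i) is entirely parallel, exchanging the roles of Dirichlet and Neumann data on $\sig$ and on $\sig_2$, and using the forward well-posedness of Theorem \ref{thm-forward} (with mixed boundary conditions) to guarantee that the auxiliary problems on $\dom$ and $D$ are solvable in the required function spaces.

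The main obstacle, I expect, is the rigorous justification of the Runge-type density/approximation step in this vector-valued, finite-time hyperbolic setting, together with bookkeeping the exact amount $2\delta$ of time lost. One must verify that the approximation holds in a norm strong enough to pass to the traces defining the maps, that the unique continuation needed for density is available for the elasticity system across the interface $\pr D$ (this is where the homogeneity of $\cc,\rho$ on $D$, hence real-analyticity of coefficients there, is used to invoke Holmgren-type uniqueness and control finite-speed support), and that cutting off data in time to the window $(0,T-2\delta)$ does not create boundary-in-time artifacts. Establishing that the transmitted field genuinely depends only on $D,\cc|_D,\rho|_D$ through the single constant $\delta$, uniformly in the data, is the delicate quantitative point; the qualitative determination follows once density and finite propagation speed are in hand.
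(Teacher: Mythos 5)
Your high-level intuition --- finite speed of propagation, a crossing time $\delta$ determined by $D$, $\cc|_D$, $\rho|_D$, and unique continuation of the field into $D$ from Cauchy data on $\sig$ --- matches the starting point of the paper, which takes $\delta=\sup_{x\in D}d_N(x,\sig)$ for the metric $N$ of Proposition \ref{ucp}. But the core of your argument, a Runge-type density of traces on $\sig_2$ of fields launched from $\sig$ inside the traces of fields launched from $\sig_2$, does not by itself yield the determination, and it misses a structural point. The solution $w$ in $\dom_2$ with traction (or displacement) data on $\sig_2$ is \emph{not} the restriction of a source-free solution on $\dom$: gluing it to a field in $D$ produces a jump of the traction across $\sig_2$, i.e.\ a distributional single-layer source supported on $\sig_2$. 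The paper's proof is built around exactly this object: it introduces $\mathscr{L}^T_D(f)=u|_{\sig_2}$, where $P_{\rho,\cc}u=T_f$ with $T_f$ a single layer on $\sig_2$, and proves the identity $(\Lambda^T_{\sig_2}-\Lambda^{T,+}_{\sig_2})\mathscr{L}^T_D f=f$ (Lemma \ref{lem-surjective-D}) together with its ND analogue (Lemma \ref{lem-surjective-N}), so that the inner map is read off from the \emph{inverse} of $\mathscr{L}^{T-2\delta}_D$ plus the map $\Lambda^{T-2\delta,+}_{\sig_2}$ for $D$ alone, which is known from $D$, $\cc|_D$, $\rho|_D$. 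Nothing in your proposal produces this inversion step, and without it ``density of traces'' does not tell you how to evaluate $\Lambda^{T-2\delta}_{\sig_2}$ on a given input.

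The second missing mechanism is how one passes from the data on $\sig$ to knowledge of $\mathscr{L}^{T-2\delta}_D$. You propose to compute the data on $\sig$ that produces a prescribed field on $\sig_2$ by solving a ``sideways'' propagation problem across $D$; that non-characteristic Cauchy problem for a hyperbolic system is ill-posed and cannot serve as a constructive step. The paper instead argues by duality and time reversal: unique continuation determines the field in $D$ up to time $T-\delta$ for every boundary input on $\sig$ (losing one $\delta$); then the pairing $\la F,\mathfrak{r}S^{T-\delta}_\sig\mathfrak{r}f\ra=-\int\pr_\cc v\, f$ identifies the Cauchy data on $\sig$ of the field $v$ generated by an interior source $F$, and a second application of unique continuation recovers $v$ in $D$ up to time $T-2\delta$ (Lemma \ref{green-determination}); finally single-layer sources on $\sig_2$ are approximated by interior sources (Lemma \ref{lemma33}). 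This adjoint step replaces your ill-posed control computation, and it is also where the factor $2\delta$ actually comes from --- one $\delta$ for each application of unique continuation, not one per direction of transit as in your bookkeeping. You would need to supply both the layer-potential inversion identity and the time-reversal/duality argument to turn your outline into a proof.
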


We need the following  result from \cite{ET}: 
\begin{prop}\label{ucp}
Suppose $\cc$, $\rho$ are homogeneous in $D$. There is a (non-Riemannian) metric $N$ on $\mathcal{T} D$, determined by $\cc|_D$ and $\rho|_D$, such that if $P_{\rho,\cc}w=0$ in $D\times(0,T)$, $(w, \pr_\cc w)|_{(0,T)\times\sig}=0$, then $w(T/2,x)=0$ for any $x\in D$ such that $d_N(x,\sig)<T/2$.
\end{prop}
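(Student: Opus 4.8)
The plan is to recognize this as a sharp lateral unique continuation (domain-of-dependence) statement for a constant-coefficient hyperbolic system and to read off from it the Finsler travel-time metric $N$. Since $\cc$ and $\rho$ are homogeneous in $D$, the operator $P_{\rho,\cc}$ has constant, hence real-analytic, coefficients; its principal symbol at $(\tau,\xi)\in\R\times\R^3$ is $-\rho\tau^2 I_3+A(\xi)$, where the acoustic (Christoffel) tensor is $A(\xi)_{il}=\sum_{j,k=1}^3 C_{ijkl}\xi_j\xi_k$, which is symmetric by the assumed symmetries of $\cc$. By strong convexity $A(\xi)$ is positive definite for $\xi\neq 0$, so $P_{\rho,\cc}$ is hyperbolic in $t$ and the three direction-dependent phase speeds are $\sqrt{\mu_m(A(\xi))/\rho}$, $m=1,2,3$, with $\mu_m$ the eigenvalues of $A(\xi)$ on $|\xi|=1$. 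First I would define $N$ as the travel-time (group-velocity) metric attached to the slowness surface $\{\xi\in\R^3:\det(A(\xi)-\rho I_3)=0\}$: concretely $d_N(x,y)$ is the least time in which the slowest of the three modes can carry a signal from $y$ to $x$, so that the unit $N$-ball is the inner (slow) sheet of the wave surface. Because the speeds depend on direction, $N$ is genuinely Finsler (non-Riemannian), which accounts for the phrasing in the statement.

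Next I would reduce the claim to propagating the vanishing lateral Cauchy data inward. The lateral boundary $(0,T)\times\sig$ has spacetime conormal $(0,\nu)$, and $A(\nu)$ is positive definite, so this surface is non-characteristic; thus the hypothesis $(w,\pr_\cc w)|_{(0,T)\times\sig}=0$ is genuine Cauchy data on a timelike boundary, the setting in which lateral unique continuation applies. The role of $T/2$ is purely geometric: for a point $x_0\in D$ the two families of rays issued from $(T/2,x_0)$ reach $\sig$ while remaining inside the slab $(0,T)$ precisely when the slowest-mode travel time from $x_0$ to $\sig$ is below $T/2$, i.e. when $d_N(x_0,\sig)<T/2$. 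A one-dimensional model (a single mode of speed $c$) already displays the mechanism: the two characteristics through $(T/2,x_0)$ meet $\sig$ at times $T/2\pm x_0/c\in(0,T)$ exactly when $x_0/c<T/2$, and d'Alembert then forces $w(T/2,x_0)=0$ there; for the system the slowest of the three modes is the binding one, which is why $N$ is built from the inner sheet.

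The main analytic step is to carry out this propagation sharply. I would foliate the spacetime region bounded by $(0,T)\times\sig$ and the backward and forward characteristic cones issued from $\{T/2\}\times\{d_N(\cdot,\sig)<T/2\}$ by a one-parameter family of hypersurfaces that are spacelike for $P_{\rho,\cc}$ and whose limiting leaves are tangent to the wave surface, and propagate the vanishing leaf by leaf. Two routes are available: either invoke the Holmgren--John theorem across each leaf, which is legitimate because the coefficients are analytic and each leaf is non-characteristic; or, to obtain the sharp cone directly, run an energy estimate for $P_{\rho,\cc}$ over each lens-shaped region, multiplying the equation by $\pr_t w$ and integrating by parts. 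In the resulting energy identity the flux through the lateral portion vanishes by the zero Cauchy data, while the flux through the characteristic boundary is a nonnegative quadratic form in $(\pr_t w, Dw)$ precisely because of strong convexity of $\cc$; a Gronwall argument in the foliation parameter then yields $w\equiv 0$ throughout the region, and in particular $w(T/2,x_0)=0$ whenever $d_N(x_0,\sig)<T/2$.

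The hard part is the anisotropy combined with the system (rather than scalar) structure. For a scalar wave equation the slowness surface is a single ellipsoid and $N$ is Riemannian, but here it has three sheets that may be nonconvex and may carry conical or cuspidal singularities (internal conical refraction). Constructing the foliating surfaces so that they remain non-characteristic up to the sharp wave surface, identifying the correct slowest-mode Finsler distance rather than a crude bound by the maximal speed, and controlling the sign of the energy flux across a possibly singular wavefront are the delicate points; simultaneously diagonalizing the principal part while tracking the lower-order coupling is what makes the sharp constant $T/2$ (rather than a smaller, nonsharp radius) attainable. This sharp geometry of $N$ is exactly the content supplied by \cite{ET}, on which I would rely.
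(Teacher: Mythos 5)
The paper gives no proof of this proposition at all: it is imported verbatim from \cite{ET}, and since your proposal also ends by delegating the sharp content (the construction of the Finsler metric $N$ from the slow sheet of the wave surface and the optimal sweeping argument) to \cite{ET}, in substance you take the same route as the paper. Your surrounding sketch is consistent with how such results are established: constant coefficients make a Holmgren--John sweep legitimate, the lateral boundary is non-characteristic because the acoustic tensor $A(\nu)$ is positive definite by strong convexity, and the binding constraint in the sweep is indeed the slowest mode, since all three sheets of the characteristic variety must be avoided by the conormals of the foliating surfaces --- this is exactly why $N$ is built from the inner sheet and is genuinely non-Riemannian. One caution: the alternative route you offer, an energy identity obtained by multiplying by $\pr_t w$ over lens-shaped regions, does not work as stated, because the hypothesis supplies Cauchy data only on the \emph{timelike} lateral surface $(0,T)\times\sig$ and nothing on any spacelike leaf; there is therefore no surface of known data from which to launch the energy inequality, and propagating zero lateral Cauchy data inward from a timelike surface is precisely what requires a unique continuation theorem of Holmgren type rather than a flux argument. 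Since you present that route only as an option and rely primarily on Holmgren--John together with \cite{ET}, this does not amount to a gap in the proposal.
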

Here $N$ is a family of norms $N_x$ on $\R^3\equiv\mathcal{T}_x D$, $x\in\dom$, which induces a distance on $D$ by
\begin{equation}
d(x,y)=\inf_{\gamma}\int_0^1 N_{\gamma(t)}(\gamma'(t))\dd t,
\end{equation}
where the infimum is taken over curves $\gamma\in C^1([0,1];D)$ such that $\gamma(0)=x$ and $\gamma(1)=y$. The distance to the boundary is defined in the usual way as
\begin{equation}
d_N(x,\sig)=\inf_{y\in\sig}d_N(x,y).
\end{equation}

Now let $H^{-1}(\Omega)$ be the dual space of $H^1_0(\Omega)$, and for $F\in H^1((0,T); H^{-1}(\Omega))$ let $u$ satisfy
\begin{equation}\label{inner-data}
\left\{\begin{array}{l}
P_{\rho,\cc}u=F\,\,\text{\rm in}\,\,\dom_T,\\[5pt]
u|_{(0,T)\times\gam}=0,\\[5pt]
(u, \pr_t u)|_{t=0}=0. 
\end{array}\right.
\end{equation}
Then $u$ has the estimate 
\begin{equation}
||u||_{X_D}\leq C||F||_{H^1((0,T);H^{-1}(\dom))},
\end{equation}
with
\begin{equation}
 ||u||_{X_D}=||u||_{L^\infty((0,T);H^1_0(\dom))}+||\pr_t u||_{L^\infty((0,T); L^2(\dom))}.
\end{equation}
We write 
\begin{equation}
G^D_{\rho,\cc}(F)=u, \quad G^D_{\rho,\cc}:H^1((0,T);H^{-1}(\dom))\to X_D.
\end{equation}

Also, if $u$ satisfies
\begin{equation}\label{inner-data-N}
\left\{\begin{array}{l}
P_{\rho,\cc}u=F\,\,\text{\rm in}\,\,\dom_T,\\[5pt]
\pr_\cc u|_{(0,T)\times\gam}=0,\\[5pt]
(u, \pr_t u)|_{t=0}=0, 
\end{array}\right.
\end{equation}
then 
\begin{equation}
||u||_{X_N}\leq C||F||_{H^1((0,T);H^{-1}(\dom))},
\end{equation}
with
\begin{equation}
 ||u||_{X_N}=||u||_{L^\infty((0,T);H^1(\dom))}+||\pr_t u||_{L^\infty((0,T); L^2(\dom))}.
\end{equation}
We write 
\begin{equation}
G^N_{\rho,\cc}(F)=u, \quad G^N_{\rho,\cc}:H^1((0,T);H^{-1}(\dom))\to X_N.
\end{equation}

\bigskip
Now let $\delta=\sup_{x\in D} d_N(x,\sig)$. Then we have the following lemma.
\begin{lem}\label{green-determination}
For any $F\in C^\infty(\dom_T)$, $\supp F\subset (0, T-\delta)\times D$, the DN map $\Lambda^{T,\sig}_{\rho,\cc}$, $\rho|_D$, and $\cc|_D$ determine $G^D_{\rho,\cc}(F)|_{ (0, T-2\delta)\times D}$.
\end{lem}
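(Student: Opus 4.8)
The plan is to reconstruct $u=G^D_{\rho,\cc}(F)$ inside $D$ in two stages: first recover the Neumann trace $\pr_\cc u$ on $\sig$ from $\Lambda^{T,\sig}_{\rho,\cc}$ by a duality (Lagrange/Green) identity, and then solve a lateral Cauchy problem for $u$ inside $D$, where $\rho$ and $\cc$ are known, controlling the region in which the reconstruction is valid by the unique continuation statement of Proposition \ref{ucp} together with finite speed of propagation.

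First I would write the Lagrange identity for $P_{\rho,\cc}$. Let $v$ solve $P_{\rho,\cc}v=0$ in $\dom_T$ with final data $(v,\pr_t v)|_{t=T}=0$ and Dirichlet data $v|_{(0,T)\times\gam}=g$ supported in $\overline\sig$. Pairing $P_{\rho,\cc}u=F$ against $v$ and integrating by parts in $t$ and $x$, the temporal boundary terms drop out (by $(u,\pr_t u)|_{t=0}=0$ and the final condition on $v$) and, since $u|_{(0,T)\times\gam}=0$, the lateral terms collapse to
\[ \int_{(0,T)\times D}F\cdot v\,\dd x\,\dd t=-\int_{(0,T)\times\sig}(\pr_\cc u)\cdot g\,\dd S\,\dd t. \]
Since $P_{\rho,\cc}$ is invariant under $t\mapsto T-t$, the Neumann trace $\pr_\cc v|_\sig$ of such backward solutions is furnished by $\Lambda^{T,\sig}_{\rho,\cc}$, so the full Cauchy data $(g,\pr_\cc v|_\sig)$ of $v$ on $\sig$ is known. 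As $v$ solves the homogeneous equation in $D$ with known coefficients, Proposition \ref{ucp}, applied on time subwindows and combined with finite speed of propagation, lets me solve this Cauchy problem and recover $v|_D$ on $\{(t,x):d_N(x,\sig)<t\}$. Because $F$ is supported in $(0,T-\delta)\times D$, the left-hand side is then computable for data $g$ whose backward solution $v$ is controlled on $\supp F$, and letting such $g$ vary over a dense set recovers $\pr_\cc u$ on the corresponding time window of $\sig$.

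In the second stage, $u|_D$ solves $P_{\rho,\cc}u=F$ in $(0,T)\times D$ with known source, zero initial data, and the now-known Cauchy data $(0,\pr_\cc u|_\sig)$ on $\sig$. Applying Proposition \ref{ucp} to the difference of two candidate solutions — which has vanishing source, vanishing initial data and vanishing Cauchy data on $\sig$, and which I extend by zero to $t<0$ so that the shifted symmetric windows remain admissible — yields uniqueness, hence reconstruction, of $u(t,x)$ for $d_N(x,\sig)<T-t$. Since $\delta=\sup_{x\in D}d_N(x,\sig)$, this covers all of $D$ for $t<T-\delta$; together with the $\delta$ already spent to produce $v|_D$ and $\pr_\cc u|_\sig$ in the first stage, the reconstruction of $u|_D$ is valid precisely on $(0,T-2\delta)\times D$, as claimed.

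The principal difficulty is the bookkeeping of the domain of determinacy in the presence of the interior interface $\sig_2=\pr\dom_2\setminus\pr\dom$, across which both $u$ and $\pr_\cc u$ are completely unknown: the whole scheme works only if the waves employed never reach $\sig_2$ within the time for which they are controlled. This is exactly what Proposition \ref{ucp} encodes through the metric $N$ and the distance $d_N$, and it is what forces the two successive losses of $\delta$ and the final time $T-2\delta$; matching the time windows of the two stages so that they close up at $T-2\delta$ (rather than leaving an uncontrolled early-time, deep-interior gap) is the delicate point. A secondary technical issue is to run the duality pairing and the passage ``$g$ arbitrary $\Rightarrow\pr_\cc u|_\sig$ determined'' in the correct time-dependent spaces from Theorem \ref{thm-forward} (the duality of $\hb(\sig)$ and $H^{-1/2}(\sig)$), and to verify that the zero extensions past the initial and final times preserve the equation so that Proposition \ref{ucp} may be invoked on the translated windows.
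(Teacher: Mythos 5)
Your overall strategy --- a Lagrange-identity duality to recover $\pr_\cc u|_\sig$ from $\Lambda^{T,\sig}_{\rho,\cc}$, followed by lateral-Cauchy-problem unique continuation into $D$ --- is the same as the paper's, and your second stage and the $2\delta$ bookkeeping are essentially right. The gap is in the first stage, in where your test functions $v$ are actually determined. You anchor $v$ at the final time $T$ (zero final data at $t=T$) and reconstruct it in $D$ directly from its lateral Cauchy data $(g,\mathfrak{r}\Lambda^{T,\sig}_{\rho,\cc}\mathfrak{r}g)$ on $\sig$. Since $v(0,\cdot)\neq 0$ in general, $v$ cannot be extended by zero to $t<0$, so the symmetric windows of Proposition \ref{ucp} centered at $t$ must stay inside $(0,\infty)$ and have half-width at most $t$; this is exactly why you only obtain $v$ on $\{(t,x):d_N(x,\sig)<t\}$. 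But that region is independent of $g$, and it misses the early-time, deep-interior set $\{(t,x): t\le d_N(x,\sig)\}$, which $\supp F$ is allowed to meet (the lemma only assumes $\supp F\subset(0,T-\delta)\times D$). For such $F$ the left-hand side $\int Fv$ of your identity is not computable for \emph{any} $g$: the qualifier ``data $g$ whose backward solution is controlled on $\supp F$'' selects the empty set, because the controlled region does not depend on $g$. Finite speed of propagation does not rescue this either; for a backward solution it gives vanishing \emph{after} the temporal support of $g$, not before it.

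The paper's proof avoids this by reversing the order of operations: it first reconstructs the \emph{forward} solution operator $S^{T-\delta}_\sig$ (Dirichlet data on $\sig$, zero initial data) on all of $(0,T-\delta)\times D$ --- here the zero extension to $t<0$ is legitimate, so the unique-continuation window $(t-\delta,t+\delta)$ may be centered at arbitrarily small $t>0$, the only constraint being $t+\delta<T$ --- and only then time-reverses, using as test functions $u^*=\mathfrak{r}S^{T-\delta}_\sig\mathfrak{r}f$, i.e., backward solutions anchored at $T-\delta$ rather than at $T$. These are known on the whole of $(0,T-\delta)\times D\supset\supp F$, so the duality pairing can be evaluated for every $F$ admitted by the lemma. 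If you replace your backward solutions by these, the rest of your argument (recovery of $\pr_\cc u|_\sig$ on $(0,T-\delta)\times\sig$, then unique continuation with the known source to get $u$ on $(0,T-2\delta)\times D$) goes through as in the paper.
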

\begin{proof}
For $f\in C_0^\infty((0,T)\times\sig)$ we define $S^T_{\sig}(f)=u|_{(0,T)\times D}$, where $u$ satisfies
\begin{equation}\label{boundary-data}
\left\{\begin{array}{l}
P_{\rho,\cc}u=0\,\,\text{\rm in}\,\,\dom_T,\\[5pt]
u|_{{(0,T)\times\gam}}=f,\\[5pt]
(u,\pr_t u)|_{t=0}=0.
\end{array}\right.
\end{equation}
 Similarly, if $F\in C_0^\infty((0,T)\times D)$ we define $S^T(F)=u|_{(0,T)\times D}$, where $u$ satisfies \eqref{inner-data}.\footnote{We use this two notations as defined here only within the proof of this lemma.} 

The first step in the proof is to show that $\Lambda^{T,\sig}_{\rho,\cc}$, $\rho|_D$, and $\cc|_D$ determine $S^{T-\delta}_\sig$. Let $f\in C_0^\infty((0,T)\times\sig)$ and  $u$ that satisfies \eqref{boundary-data}. Extend $u$ by $0$ to $(-\infty,0)\times\dom$. Let
\begin{multline}
\mathcal{S}_{f_1,f_2}^T=\{ v\in C^\infty((-\infty,T)\times D): P_{\rho,\cc} v=0, \\
v|_{(-\infty,T)\times \sig}=f_1, \pr_\cc v|_{(-\infty,T)\times \sig}=f_2\},
\end{multline}
which is determined by $f_1$, $f_2$, $D$, $\sig$, $\rho|_D$, and $\cc|_D$. Let $u'\in\mathcal{S}_{f,\Lambda^{T,\sig}_{\rho,\cc} f}^T$, and set $w=u-u'$. Then $P_{\rho,\cc}w=0$ on $D$,  and $(w, \pr_\cc w)|_{(0,T)\times\sig}=0$. For $t<T-\delta$ we can apply Proposition \ref{ucp} with the time interval $(t-\delta, t+\delta)$ to conclude that $w(t,x)=0$ for all $x\in D$. So we can now assume $S^{T-\delta}_\sig$ is known.

 Let $\mathfrak{r}$ be the time reversal operator on $(0,T-\delta)$. That is $(\mathfrak{r}\,\ell)(t)=\ell(T-\delta-t),\,t\in(0,T-\delta)$ for any function $\ell$ over $(0,T-\delta)$. If we write
\begin{equation}
u^*=\mathfrak{r}S^{T-\delta}_\sig\mathfrak{r}f,
\end{equation}
$u^*$ satisfies
\begin{equation}
\left\{\begin{array}{l}
P_{\rho,\cc}u^*=0\,\,\text{\rm in}\,\,(0, T-\delta)\times D,\\[5pt]
 u^*|_{(0,T)\times\gam}= f, (u^*, \pr_t u^*)|_{t=T-\delta}=0.
\end{array}\right.
\end{equation}
We want to identify the adjoint in $L^2((0,T-\delta)\times D)$ of $\mathfrak{r}S^{T-\delta}_\sig\mathfrak{r}$. Let $F\in C_0^\infty((0,T)\times D)$ and denote $v=G^D_{\rho,\cc}(F)$. Then, using integration by parts,
\begin{multline}
\la F, \mathfrak{r}S^{T-\delta}_\sig\mathfrak{r} f \ra=
\int_0^{T-\delta}\int_D F(t,x) u^*(t,x) \dd x\dd t=
\int_0^{T-\delta}\int_\dom P_{\rho,\cc}(v) u^* \dd x\dd t\\=
-\int_0^{T-\delta}\int_\sig \pr_\cc v f.
\end{multline}
We may then take the map $F\to \pr_\cc v|_\sig$ to be known. Extend $v$ as $0$ to $(-\infty,0)\times\dom$ and let $v'\in \mathcal{S}^{T-\delta}_{0, \pr_\cc v|_\sig}$. Define $w=v-v'$. As above we can conclude that $w(t,x)=0$ for all $t<T-2\delta$ and $x\in D$. This proves that $S^{T-2\delta}(F)$ is determined by the  knowledge of $D$, the DN map $\Lambda^{T,\sig}_{\rho,\cc}$, $\rho|_D$ and $\cc|_D$.
\end{proof}

\begin{lem}\label{green-determination-N}
For any $F\in C^\infty(\dom_T)$, $\supp F\subset (0, T-\delta)\times D$, the ND map $\Phi^{T,\sig}_{\rho,\cc}$, $\rho|_D$, and $\cc|_D$ determine $G^N_{\rho,\cc}(F)|_{ (0, T-2\delta)\times D}$.
\end{lem}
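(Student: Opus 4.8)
The plan is to mirror the proof of Lemma \ref{green-determination}, interchanging the roles of the Dirichlet and Neumann traces on $\sig$ throughout and replacing the Dirichlet Green operator by the Neumann one $G^N_{\rho,\cc}$. For $g\in C_0^\infty((0,T)\times\sig)$ I set $S^T_\sig(g)=u|_{(0,T)\times D}$, where $u$ solves $P_{\rho,\cc}u=0$ in $\dom_T$, $\pr_\cc u|_{(0,T)\times\gam}=g$, $(u,\pr_t u)|_{t=0}=0$; and for $F\in C_0^\infty((0,T)\times D)$ I set $S^T(F)=u|_{(0,T)\times D}$, where $u$ solves \eqref{inner-data-N}, so $S^T(F)=G^N_{\rho,\cc}(F)|_{(0,T)\times D}$.

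First I would show that $\Phi^{T,\sig}_{\rho,\cc}$, $\rho|_D$ and $\cc|_D$ determine $S^{T-\delta}_\sig$. Given $g$ and the associated $u$, the ND map supplies the full Cauchy data of $u$ on $\sig$: the traction $\pr_\cc u|_{(0,T)\times\sig}=g$ is prescribed, while the displacement $u|_{(0,T)\times\sig}=\Phi^{T,\sig}_{\rho,\cc}g$ is determined. After extending $u$ by zero to $(-\infty,0)\times\dom$, I would choose $u'\in\mathcal{S}^T_{\Phi^{T,\sig}_{\rho,\cc}g,\,g}$ and set $w=u-u'$, so that $P_{\rho,\cc}w=0$ in $D$ and $(w,\pr_\cc w)|_{(0,T)\times\sig}=0$. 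Applying Proposition \ref{ucp} on the intervals $(t-\delta,t+\delta)$ exactly as in the previous lemma (the zero extension handling small $t$) gives $w(t,\cdot)=0$ on $D$ for every $t<T-\delta$, so $S^{T-\delta}_\sig$ is determined.

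The second step is a duality argument. With $\mathfrak{r}$ the time reversal on $(0,T-\delta)$, the operator $\mathfrak{r}S^{T-\delta}_\sig\mathfrak{r}$ produces from $g$ the backward solution $u^*$ on $(0,T-\delta)\times D$ with $P_{\rho,\cc}u^*=0$, $\pr_\cc u^*|_\gam=g$ on $\sig$, and vanishing data at $t=T-\delta$; here I use that $P_{\rho,\cc}$ carries no first-order time derivative, hence is time-reversible. For $F\in C_0^\infty((0,T)\times D)$ and $v=G^N_{\rho,\cc}(F)$, Betti's second identity integrated in time gives
\begin{equation*}
\la F,\mathfrak{r}S^{T-\delta}_\sig\mathfrak{r}g\ra=\int_0^{T-\delta}\int_\dom (P_{\rho,\cc}v)\,u^*=\int_0^{T-\delta}\int_\sig v\,g,
\end{equation*}
where the time-boundary terms drop out by the complementary initial/final conditions on $v$ and $u^*$, and the only surviving spatial boundary term is $\int_\sig v\,g$ precisely because $v$ has traction-free data ($\pr_\cc v|_\gam=0$), the reverse of what occurred in the Dirichlet case. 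Hence the map $F\mapsto v|_{(0,T-\delta)\times\sig}$ is determined.

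Finally, knowing $v|_\sig$ together with the prescribed $\pr_\cc v|_\sig=0$ yields the full Cauchy data of $v$ on $\sig$. Extending $v$ by zero to negative times, choosing $v'\in\mathcal{S}^{T-\delta}_{v|_\sig,\,0}$ and adding a particular solution of $P_{\rho,\cc}\tilde v=F$ in $D$ that is constructible from the known $F$, $\rho|_D$, $\cc|_D$ (for instance by solving a Dirichlet initial-boundary value problem on $D$), the difference $w=v-v'-\tilde v$ solves the homogeneous equation in $D$ with zero Cauchy data on $\sig$, so Proposition \ref{ucp} forces $w(t,\cdot)=0$ on $D$ for $t<T-2\delta$. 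This identifies $G^N_{\rho,\cc}(F)|_{(0,T-2\delta)\times D}$ from the given data. I expect the main obstacle to be making the duality step rigorous at the regularity of the Neumann Green function, where $v\in X_N$ is only $L^\infty(0,T;H^1(\dom))$, the traces on $\sig$ must be read in the weak sense, and one must confirm that the backward solution $u^*$ genuinely carries the claimed traction and final data; by contrast the two unique-continuation steps are identical to the Dirichlet case and should pose no new difficulty.
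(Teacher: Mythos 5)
Your proposal reproduces the paper's proof essentially step for step: the same operators $S^T_\sig$ and $S^T$, the same first unique-continuation step with $u'\in\mathcal{S}^T_{\Phi^{T,\sig}_{\rho,\cc}g,\,g}$, the same time-reversal/adjoint identity yielding $\la F,\mathfrak{r}S^{T-\delta}_\sig\mathfrak{r}g\ra=\int_0^{T-\delta}\int_\sig v\cdot g$, and the same final unique-continuation step from the Cauchy data $(v|_\sig,0)$. The only deviation is your insertion of a particular solution $\tilde v$ of $P_{\rho,\cc}\tilde v=F$ before the last application of Proposition \ref{ucp}, which the paper omits; this is a harmless and in fact slightly more careful treatment of the inhomogeneity, not a different approach.
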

\begin{proof}
For $g\in C_0^\infty((0,T)\times\sig)$ we define $S^T_{\sig}(g)=u|_{(0,T)\times D}$, where $u$ satisfies
\begin{equation}\label{boundary-data-N}
\left\{\begin{array}{l}
P_{\rho,\cc}u=0\,\,\text{\rm in}\,\,\dom_T,\\[5pt]
\pr_\cc u|_{{(0,T)\times\gam}}=g,\\[5pt]
(u,\pr_t u)|_{t=0}=0.
\end{array}\right.
\end{equation}
 Similarly, if $F\in C_0^\infty((0,T)\times D)$ we define $S^T(F)=u|_{(0,T)\times D}$, where $u$ satisfies \eqref{inner-data-N}.

The first step in the proof is to show that $\Phi^{T,\sig}_{\rho,\cc}$, $\rho|_D$, and $\cc|_D$ determine $S^{T-\delta}_\sig$. Let $g\in C_0^\infty((0,T)\times\sig)$ and  $u$ that satisfies \eqref{boundary-data-N}. Extend $u$ by $0$ to $(-\infty,0)\times\dom$. Let $u'\in\mathcal{S}_{\Phi^{T,\sig}_{\rho,\cc} g, g}^T$, and set $w=u-u'$. Then $P_{\rho,\cc}w=0$ on $D$,  and $(w, \pr_\cc w)|_{(0,T)\times\sig}=0$. For $t<T-\delta$ we can apply Proposition \ref{ucp} with the time interval $(t-\delta, t+\delta)$ to conclude that $w(t,x)=0$ for all $x\in D$. So we may now assume $S^{T-\delta}_\sig$ is known.

If we write
\begin{equation}
u^*=\mathfrak{r}S^{T-\delta}_\sig\mathfrak{r}g,
\end{equation}
$u^*$ satisfies
\begin{equation}
\left\{\begin{array}{l}
P_{\rho,\cc}u^*=0\,\,\text{\rm in}\,\,(0, T-\delta)\times D,\\[5pt]
\pr_\cc  u^*|_{(0,T)\times\gam}= g,\\[5pt]
 (u^*, \pr_t u^*)|_{t=T-\delta}=0.
\end{array}\right.
\end{equation}
We want to identify the adjoint in $L^2((0,T-\delta)\times D)$ of $\mathfrak{r}S^{T-\delta}_\sig\mathfrak{r}$. Let $F\in C_0^\infty((0,T)\times D)$ and denote $v=G^N_{\rho,\cc}(F)$. Then, using integration by parts,
\begin{multline}
\la F, \mathfrak{r}S^{T-\delta}_\sig\mathfrak{r} f \ra=
\int_0^{T-\delta}\int_D F(t,x) u^*(t,x) \dd x\dd t=
\int_0^{T-\delta}\int_\dom P_{\rho,\cc}(v) u^* \dd x\dd t\\=
\int_0^{T-\delta}\int_\sig v\cdot g.
\end{multline}
We can then take the map $F\to  v|_\sig$ to be known. Extend $v$ as $0$ to $(-\infty,0)\times\dom$ and let $v'\in \mathcal{S}^{T-\delta}_{v|_\sig,0}$. Define $w=v-v'$. As above we can conclude that $w(t,x)=0$ for all $t<T-2\delta$ and $x\in D$. This proves that $S^{T-2\delta}(F)$ is determined by the knowledge of $D$, the ND map $\Phi^{T,\sig}_{\rho,\cc}$, $\rho|_D$ and $\cc|_D$.
\end{proof}

Let  $f\in C^1((0,T); H^{-1/2}(\sig_2))$, and define $T_f\in C^1((0,T); H^{-1}(\dom))$ by
\begin{equation}
\la T_f,\phi\ra=\int_0^T\la f,\phi|_{\sig_2}\ra,\quad \phi\in H^1_0(\dom).
\end{equation}
Define the operator  $\mathscr{L}^T_D$ by  $\mathscr{L}^T_D(f)=u|_{\sig_2}$, where
\begin{equation}\label{definition-LD}
\left\{\begin{array}{l}
P_{\rho,\cc}u=T_f\,\,\text{\rm in}\,\,\dom_T,\\[5pt]
 u|_{(0,T)\times\gam}=0,\\[5pt]
(u, \pr_t u)|_{t=0}=0,
\end{array}\right.
\end{equation}
and the operator $\mathscr{L}^T_N$ by $\mathscr{L}^T_N(f)=u|_{\sig_2}$, where
\begin{equation}\label{definition-LN}
\left\{\begin{array}{l}
P_{\rho,\cc}u=T_f\,\,\text{\rm in}\,\,\dom_T,\\[5pt]
 \pr_\cc u|_{(0,T)\times\gam}=0,\\[5pt]
(u, \pr_t u)|_{t=0}=0.
\end{array}\right.
\end{equation}

\begin{lem}\label{lemma33}
$\mathscr{L}^{T-2\delta}_D$ is determined by the  knowledge of $D$, the DN map $\Lambda^{T,\sig}_{\rho,\cc}$, $\rho|_D$, and $\cc|_D$.
$\mathscr{L}^{T-2\delta}_N$ is determined by the  knowledge of $D$, the ND map $\Phi^{T,\sig}_{\rho,\cc}$, $\rho|_D$, and $\cc|_D$.
\end{lem}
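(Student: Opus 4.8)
The plan is to express $\mathscr{L}^{T-2\delta}_D$ through the Green operator $G^D_{\rho,\cc}$ and then invoke Lemma \ref{green-determination}. Fix $f\in C^1((0,T-2\delta);H^{-1/2}(\sig_2))$ and let $T_f(t)$ be the single-layer source on $\sig_2$ with density $f(t)$. Since the trace $\phi\mapsto\phi|_{\sig_2}$ is bounded $H^1_0(\dom)\to H^{1/2}(\sig_2)$, we have $T_f\in C^1((0,T);H^{-1}(\dom))\subset H^1((0,T);H^{-1}(\dom))$, so $u=G^D_{\rho,\cc}(T_f)$ is well defined. Because $T_f$ is supported in time in $(0,T-2\delta)$, finite speed of propagation (equivalently, uniqueness for the forward problem of Theorem \ref{thm-forward}) identifies the solution on the horizon $T-2\delta$ with $u|_{(0,T-2\delta)\times\dom}$, whence $\mathscr{L}^{T-2\delta}_D(f)=u|_{(0,T-2\delta)\times\sig_2}$, the trace taken from the $D$ side. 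Thus it suffices to show that $u|_{(0,T-2\delta)\times\sig_2}$ is determined by $D$, $\Lambda^{T,\sig}_{\rho,\cc}$, $\rho|_D$ and $\cc|_D$.

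The obstruction to applying Lemma \ref{green-determination} directly is that $T_f$ is supported on the interface $\sig_2\subset\pr D$ rather than strictly inside $D$. I would remove this by approximation. For each fixed $t$, any $\phi\in H^1_0(\dom)$ with $\phi|_D=0$ has vanishing trace on $\sig_2$, so $\la T_f(t),\phi\ra=0$; hence $T_f(t)$ lies in the annihilator of $\{\phi\in H^1_0(\dom):\phi|_D=0\}$, which by reflexivity of $H^1_0(\dom)$ is exactly the $H^{-1}(\dom)$-closure of $C_0^\infty(D)$. Translating the density $f$ slightly into $D$ along a transversal vector field (available since the Lipschitz interface $\sig_2$ bounds $D$ on one side) and mollifying in space and time produces $F_n\in C^\infty(\dom_T)$ with $\supp F_n\subset(0,T-\delta)\times D$ and $F_n\to T_f$ in $H^1((0,T);H^{-1}(\dom))$. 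By the continuity estimate for $G^D_{\rho,\cc}:H^1((0,T);H^{-1}(\dom))\to X_D$, we obtain $G^D_{\rho,\cc}(F_n)\to u$ in $X_D$, in particular in $L^\infty((0,T);H^1_0(\dom))$.

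By Lemma \ref{green-determination} each $G^D_{\rho,\cc}(F_n)|_{(0,T-2\delta)\times D}$ is determined by $D$, $\Lambda^{T,\sig}_{\rho,\cc}$, $\rho|_D$ and $\cc|_D$; composing with the bounded trace $H^1(D)\to H^{1/2}(\sig_2)$ shows the same for $G^D_{\rho,\cc}(F_n)|_{(0,T-2\delta)\times\sig_2}$. Since the trace operator is continuous and $G^D_{\rho,\cc}(F_n)\to u$ in $L^\infty((0,T);H^1(D))$ after restriction to $D$, the traces converge in $L^\infty((0,T-2\delta);H^{1/2}(\sig_2))$ to $u|_{(0,T-2\delta)\times\sig_2}=\mathscr{L}^{T-2\delta}_D(f)$. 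As a limit of quantities each determined by the listed data (the approximants $F_n$ depend only on $f$ and the geometry, not on the coefficients), $\mathscr{L}^{T-2\delta}_D(f)$ is itself so determined. The Neumann statement follows by the identical argument with $G^N_{\rho,\cc}$, $X_N$ and Lemma \ref{green-determination-N} replacing their Dirichlet counterparts; the only change is that $X_N$ controls $H^1(\dom)$ rather than $H^1_0(\dom)$, which still renders the trace on $\sig_2$ continuous. The main obstacle is precisely this approximation step: one must simultaneously push the support of the single-layer source $T_f$ strictly into $D$, keep it smooth, and retain convergence in $H^1((0,T);H^{-1}(\dom))$, so that both Lemma \ref{green-determination} and the continuity of $G^D_{\rho,\cc}$ apply and the trace passes to the limit.
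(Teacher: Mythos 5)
Your proposal is correct and follows essentially the same route as the paper: both approximate the single-layer source $T_f$ by smooth sources supported strictly inside $D$ (the paper pushes the layer to depth $\epsilon$ in boundary-flattening coordinates and bounds the difference by $C\epsilon^{1/2}$), apply Lemma \ref{green-determination}, and pass to the limit using the continuity of $G^D_{\rho,\cc}$. The only cosmetic difference is that the paper recovers the pairing $T_h(G^D_{\rho,\cc}(T_f))$ against a second single layer $h$ rather than composing with the trace on $\sig_2$ directly; the two are equivalent.
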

\begin{proof}
Let  $f\in C^2_0((0,T);C_0^\infty(\sig_2))$. In local (in space) coordinates we can arrange that $\sig_2$ is $\{x_3=0\}$ and $D$ is  $\{x_3>0\}$. Suppose the spatial support of $f$ is entirely in this coordinate patch. For $\epsilon>0$ define $F_\epsilon\in C^1((0,T);H^{-1}(\dom))$ by
\begin{equation}
 F_\epsilon (\phi)(t)= \int_{x_3=0} f(t,x')\phi(x',\epsilon)\dd x', \quad \phi\in C_0^\infty(D).
\end{equation}
Then
\begin{multline}
|F_\epsilon(\phi)(t)-T_f(\phi)(t)|= \left|\int_{0\leq x_3\leq\epsilon} f(t,x')\pr_3\phi(x',x_3)  \right|\\
\leq C||\phi||_{H^1_0(\dom)}||f||_{L^\infty((0,T)\times\sig_2)}\epsilon^{1/2},
\end{multline}
and similarly
\begin{equation}
|\pr_t F_\epsilon(\phi)(t)-\pr_tT_f(\phi)(t)| \leq C||\phi||_{H^1_0(\dom)}||\pr_t f||_{L^\infty((0,T)\times\sig_2)}\epsilon^{1/2}.
\end{equation}
Using a partition of unity argument, we can therefore construct a sequence $F_n\in C^1((0,T);H^{-1}(\dom))$, $\supp F_n\subset (0,T)\times D$, such that $F_n\to T_f$ in $C^1((0,T);H^{-1}(\dom))$. 

Suppose now that we take two functions $f,h \in C^2_0((0,T-2\delta);C_0(\sig_2))$ and construct sequences $F_n$, $H_n$ as above. By Lemma \ref{green-determination}, $H_n(G^D_{\rho,C}(F_n))$ is determined by the  knowledge of $D$, the DN map $\Lambda^{T,\sig}_{\rho, \cc}$, $\rho|_D$, and $\cc|_D$. Passing to the limit we see that so is $T_h( G^D_{\rho,\cc}(T_f))$ and therefore so is $\mathscr{L}^{T-2\delta}_D$. The same is true for $\mathscr{L}^{T-2\delta}_N$.
\end{proof}

 Let  $\Lambda^{T,+}_{\sig_2}$ be the DN map for the domain $D$ with data on $\sig_2$. Also let $\Phi^{T,+}_{\sig_2}$ be the ND map for the domain $D$ with data on $\sig_2$.

\begin{lem}
If $(\Lambda^T_{\sig_2}-\Lambda^{T,+}_{\sig_2})f=0$   for $f\in C^\infty_0((0,T);\hb(\sig_2))$, then $f=0$. If $(\Phi^T_{\sig_2}-\Phi^{T,+}_{\sig_2})g=0$ for $g\in C^\infty_0((0,T);\hbi(\sig_2))$, then $g=0$.
\end{lem}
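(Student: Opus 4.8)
The plan is to reduce the injectivity of the difference of the two one-sided maps to the uniqueness of the homogeneous initial boundary value problem on $\dom$, by gluing the solutions coming from the two sides of $\sig_2$.

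Consider the DN case. Given $f\in C_0^\infty((0,T);\hb(\sig_2))$, let $u_+$ solve $P_{\rho,\cc}u_+=0$ in $(0,T)\times D$ with $u_+|_{\sig_2}=f$, homogeneous Dirichlet condition on the remaining part $\pr D\cap\gam$ of $\pr D$, and zero Cauchy data at $t=0$; let $u_-$ be the corresponding solution on $(0,T)\times\dom_2$ with $u_-|_{\sig_2}=f$, homogeneous Dirichlet condition on $\pr\dom_2\cap\gam$, and zero Cauchy data. Fixing one orientation $\nu$ of the normal of $\sig_2$ used in the definition of both maps, we have $\Lambda^{T,+}_{\sig_2}f=(\cc:Du_+)\nu|_{\sig_2}$ and $\Lambda^{T}_{\sig_2}f=(\cc:Du_-)\nu|_{\sig_2}$, so the hypothesis $(\Lambda^T_{\sig_2}-\Lambda^{T,+}_{\sig_2})f=0$ says precisely that the tractions $(\cc:Du_\pm)\nu$ agree on $\sig_2$, while $u_+=u_-=f$ there already gives continuity of the displacement.

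Define $u$ on $\dom_T$ by $u=u_+$ on $(0,T)\times D$ and $u=u_-$ on $(0,T)\times\dom_2$. First I would verify that $u$ is a weak solution of $P_{\rho,\cc}u=0$ on all of $\dom_T$: testing against $\phi\in C_0^\infty(\dom_T;\R^3)$ with $\phi|_{\gam}=0$ and integrating by parts in the spatial variables on $D$ and $\dom_2$ separately, the volume terms cancel since $P_{\rho,\cc}u_\pm=0$, and the only surviving contribution is the interface integral $\int_{(0,T)\times\sig_2}\big((\cc:Du_+)-(\cc:Du_-)\big)\nu\cdot\phi$, which vanishes by the traction matching. Continuity of the displacement across $\sig_2$ ensures $u$ lies in $H^1$ in space across the interface, so no distributional jump is produced. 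Hence $u$ solves $P_{\rho,\cc}u=0$ in $\dom_T$, has zero Cauchy data, and—since the two pieces carry the homogeneous Dirichlet condition on their portions of $\gam$, which together exhaust $\gam$—satisfies $u|_{(0,T)\times\gam}=0$. By the uniqueness part of Theorem \ref{thm-forward} (equivalently, the standard energy identity for $P_{\rho,\cc}$) we get $u\equiv0$, and restricting to $\sig_2$ yields $f=u|_{\sig_2}=0$. The ND case is entirely parallel with the two traces exchanged: $g$ is the common traction datum imposed on $\sig_2$ from both sides (with homogeneous Neumann conditions on the two portions of $\gam$), the hypothesis $(\Phi^T_{\sig_2}-\Phi^{T,+}_{\sig_2})g=0$ becomes continuity of the displacement while the traction matches automatically, gluing gives a solution with zero Cauchy data and homogeneous Neumann data on $\gam$, so $u\equiv0$ and $g=\pr_\cc u|_{\sig_2}=0$.

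The only genuinely delicate point is the gluing step itself: one must justify, in the regularity class furnished by Theorem \ref{thm-forward}, that matched Dirichlet and Neumann traces across $\sig_2$ really produce a bona fide weak solution on the whole of $\dom_T$ (so that the one-sided equations and transmission conditions assemble into the global weak formulation), and one must keep the orientation of $\nu$ on $\sig_2$ consistent between the two maps so that the stated \emph{difference}, rather than a sum, is the correct transmission condition. Note that, unlike the earlier lemmas, no unique continuation (Proposition \ref{ucp}) is needed here—the glued field already satisfies fully homogeneous data on $\partial\dom$ and at $t=0$, so plain energy uniqueness suffices. Well-posedness of the two one-sided mixed problems and the concluding energy estimate are standard.
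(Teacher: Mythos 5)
Your proposal is correct and follows essentially the same route as the paper: glue the two one-sided solutions across $\sig_2$ (the matched Dirichlet and Neumann traces making the glued field a global weak solution), then invoke uniqueness for the homogeneous initial--boundary value problem on $\dom_T$ to conclude $f=0$ (resp. $g=0$). The extra care you take with the weak formulation of the gluing and with the orientation of $\nu$ is consistent with the paper's convention that the normal on $\sig_2$ points from $\dom_2$ into $D$.
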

\begin{proof}
Suppose $(\Lambda^T_{\sig_2}-\Lambda^{T,+}_{\sig_2})f=0$. Let $u$ and $u^+$ be the solutions in $\dom_2$ and $D$ respectively with Dirichlet data $f$ on $\sig_2$ and zero on the rest of their respective boundaries. Define $\tilde u= u\chi_{\dom_2}+u_+\chi_D$. By the assumption, since both Dirichlet and Neumann data accross $\sig_2$ match, we have that $P_{\rho,\cc} \tilde u=0$ in $\dom$. Since $\tilde u$ has zero initial Cauchy and lateral Dirichlet data, it must be zero.

In the case when $(\Phi^T_{\sig_2}-\Phi^{T,+}_{\sig_2})f=0$, the argument is identical.
\end{proof}

\begin{lem}\label{lem-surjective-D}
For $f\in C^\infty_0((0,T)\times\sig_2)$, 
\begin{equation}
(\Lambda^T_{\sig_2}-\Lambda^{T,+}_{\sig_2})\mathscr{L}^T_D f=f.
\end{equation}
\end{lem}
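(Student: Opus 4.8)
The plan is to recognize $T_f$ as a single-layer source on $\sig_2$, so that the solution $u$ whose trace defines $\mathscr{L}^T_D f$ solves the homogeneous elasticity equation separately in $\dom_2$ and in $D$, and carries a prescribed jump of traction across $\sig_2$ equal to $f$. First I would let $u$ solve \eqref{definition-LD}, set $g=\mathscr{L}^T_D f=u|_{(0,T)\times\sig_2}$, and write $u_1=u|_{\dom_2}$, $u_2=u|_D$. Testing the weak formulation of $P_{\rho,\cc}u=T_f$ against $\phi\in H^1_0(\dom)$ supported in $\dom_2$ (resp. in $D$) annihilates the source, since $\supp T_f\subset(0,T)\times\sig_2$, and shows that $u_1$ and $u_2$ solve the homogeneous equation in their respective subdomains. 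Combined with $u|_{(0,T)\times\gam}=0$ and the vanishing initial data, this gives that $u_1$ has Dirichlet data $g$ on $\sig_2$ and $0$ on $\pr\dom\cap\pr\dom_2$, while $u_2$ has Dirichlet data $g$ on $\sig_2$ and $0$ on $\pr D\cap\pr\dom$. By the uniqueness in Theorem \ref{thm-forward}, $u_1$ is exactly the solution defining $\Lambda^T_{\sig_2}g$ and $u_2$ the one defining $\Lambda^{T,+}_{\sig_2}g$, so that, fixing the unit normal $\nu$ on $\sig_2$ pointing from $\dom_2$ into $D$, one has $\Lambda^T_{\sig_2}g=(\cc:Du_1)\nu|_{\sig_2}$ and $\Lambda^{T,+}_{\sig_2}g=(\cc:Du_2)\nu|_{\sig_2}$.

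The next step is to extract the jump relation from the variational identity. For $\phi\in H^1_0(\dom)$ the weak form reads
\begin{equation}
\int_0^T\!\!\int_\dom\rho\,\pr_t^2 u\cdot\phi+\int_0^T\!\!\int_\dom(\cc:Du):D\phi=\la T_f,\phi\ra=\int_0^T\!\!\int_{\sig_2}f\cdot\phi.
\end{equation}
Splitting the bilinear form over $\dom_2$ and $D$ and integrating by parts on each piece (legitimate because $u_1,u_2$ solve homogeneous equations there and $\phi$ vanishes on $\pr\dom$), the volume terms recombine into $-\int\rho\,\pr_t^2 u\cdot\phi$ and cancel the inertial term, leaving only the interface contribution
\begin{equation}
\int_0^T\!\!\int_{\sig_2}\big[(\cc:Du_1)\nu-(\cc:Du_2)\nu\big]\cdot\phi=\int_0^T\!\!\int_{\sig_2}f\cdot\phi.
\end{equation}
Since this holds for all such $\phi$, I obtain $(\cc:Du_1)\nu-(\cc:Du_2)\nu=f$ on $(0,T)\times\sig_2$, which is precisely $(\Lambda^T_{\sig_2}-\Lambda^{T,+}_{\sig_2})g=f$, the claimed identity.

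I expect the main obstacle to be bookkeeping rather than anything deep. One must fix a single orientation $\nu$ on $\sig_2$ and use it consistently in both DN maps — this is the convention under which the preceding lemma reads ``both Dirichlet and Neumann data match'' — and one must justify the integration by parts and the reading of $(\cc:Du_i)\nu$ as $H^{-1/2}$-valued conormal traces, since $u$ only has the regularity $u\in L^\infty((0,T);H^1_0(\dom))$ with $\pr_t u\in L^\infty((0,T);L^2(\dom))$. The identities above are therefore to be understood in the appropriate duality on $\sig_2$; because $f$ is smooth and compactly supported, testing against a dense family of $\phi$ yields the conclusion on $\sig_2$. A minor care point is that $\mathscr{L}^T_D$ was defined through the source $T_f\in H^1((0,T);H^{-1}(\dom))$, so the trace $g=u|_{\sig_2}$ and the forward solutions defining the two DN maps must be matched in these same spaces; but this is exactly the continuity built into $G^D_{\rho,\cc}$ and Theorem \ref{thm-forward}.
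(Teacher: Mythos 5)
Your proposal is correct and follows essentially the same route as the paper: test the weak formulation of $P_{\rho,\cc}u=T_f$ against $\phi\in C_0^\infty((0,T)\times\dom)$, split the elastic bilinear form over $\dom_2$ and $D$, integrate by parts on each piece so the volume terms cancel against the inertial term, and read off the interface jump of the conormal derivative as $(\Lambda^T_{\sig_2}-\Lambda^{T,+}_{\sig_2})\mathscr{L}^T_D f=f$. The extra care you take with the orientation of $\nu$ and with identifying $u|_{\dom_2}$, $u|_D$ as the forward solutions defining the two DN maps is exactly what the paper leaves implicit.
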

\begin{proof}
Let $u$ be as in \eqref{definition-LD} and $\phi\in C_0^\infty((0,T)\times\dom)$. Then
\begin{equation}
\la f,\phi|_{\sig_2}\ra=\int_0^T\int_\dom P_{\rho,\cc}(u) \phi=
\int_0^T\int_\dom \big(\pr_t^2 u \phi+Du:(\cc:D\phi)\big)
\end{equation}
and
\begin{multline}
\int_0^T\int_\dom Du:(\cc:D\phi) =\int_0^T\int_{\dom_2} Du:(\cc:D\phi)+\int_0^T\int_D Du:(\cc:D\phi)\\=
-\int_0^T\int_{\dom_2}L_{\cc}(u)\phi-\int_0^T\int_{D}L_{\cc}(u)\phi\\
+\la \Lambda^T_{\sig_2}\mathscr{L}^T_D f,\phi|_{\sig_2}\ra-\la \Lambda^{T,+}_{\sig_2}\mathscr{L}^T_D f,\phi|_{\sig_2}\ra.
\end{multline}
\end{proof}

\begin{proof}[Proof of Proposition \ref{interior-prop} (i)]It follows that $\Lambda^{T-2\delta}_{\sig_2}-\Lambda^{T-2\delta,+}_{\sig_2}$ is the inverse of $\mathscr{L}^{T-2\delta}_D$, which is then determined by $\Lambda^{T,\sig}_{\rho,\cc}$. The claim (i) follows as $\Lambda_{\Sigma_2}^{T-2\delta,+}$ is clearly determined by $D$, $C_D$
and $\rho|_D$.
\end{proof}

The argument for the ND map case is a little bit more involved. Before stating the lemma for ND maps that is analogous to Lemma \ref{lem-surjective-D} we introduce two more notations. If $u=G^N_{\rho,\cc}(T_f)$, then $\pr_{\cc_D}u|_{\sig_2^+}$ will denote the restriction of $\pr_\cc u$ to $\sig_2$, taken from the $D$ side of $\sig_2$. Similarly, $\pr_{\cc_{\dom_2}}u|_{\sig_2}$ will be the restriction of the same function to $\sig_2$, taken from the $\dom_2$ side of $\sig_2$. Note that here the unit normal vector in all cases points away from $\dom_2$ and into $D$.

\begin{lem}\label{lem-surjective-N}
For $f\in C^\infty_0((0,T)\times\sig_2)$,
\begin{equation}
(\Phi^T_{\sig_2}-\Phi^{T,+}_{\sig_2})(\pr_{\cc|_D}u|_{\sig_2^+}+f)=-\Phi^{T,+}_{\sig_2}(f).
\end{equation}
\end{lem}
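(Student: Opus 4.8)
The plan is to reduce the claimed operator identity to the \emph{transmission conditions} satisfied by the single function $u=G^N_{\rho,\cc}(T_f)$ across the interface $\sig_2$, and then to recognize the restrictions $u|_{\dom_2}$ and $u|_D$ as the solutions defining $\Phi^T_{\sig_2}$ and $\Phi^{T,+}_{\sig_2}$. First I would derive the jump relations, exactly as in the proof of Lemma \ref{lem-surjective-D}. Testing the weak form of $P_{\rho,\cc}u=T_f$ against $\phi\in C_0^\infty((0,T)\times\dom)$ and splitting the bilinear form over $D$ and $\dom_2$ gives
\[
\int_\dom Du:(\cc:D\phi)=-\int_\dom L_\cc u\cdot\phi+\int_{\sig_2}\big(\pr_{\cc_{\dom_2}}u|_{\sig_2}-\pr_{\cc_D}u|_{\sig_2^+}\big)\phi,
\]
where the sign in the interface term reflects that $\nu$ is the \emph{outer} normal of $\dom_2$ but the \emph{inner} normal of $D$. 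Since $\phi$ is compactly supported in $\dom$ the boundary terms on $\gam$ drop out, and since $P_{\rho,\cc}u=0$ pointwise in the interior of each subdomain the volume integral vanishes; comparing with $\la T_f,\phi\ra=\int_0^T\la f,\phi|_{\sig_2}\ra$ yields the two transmission conditions, namely continuity of the trace $u|_{\sig_2^-}=u|_{\sig_2^+}$ (because $u\in H^1$) and
\[
\pr_{\cc_{\dom_2}}u|_{\sig_2}=\pr_{\cc_D}u|_{\sig_2^+}+f.
\]

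Next I would read off the two Neumann-to-Dirichlet maps from these conditions. The restriction $u|_{\dom_2}$ solves $P_{\rho,\cc}=0$ in $(\dom_2)_T$ with zero initial data and vanishing traction on $\pr\dom_2\cap\gam$, while its $\nu$-traction on $\sig_2$ equals $\pr_{\cc_D}u|_{\sig_2^+}+f$; hence by the definition of the ND map of $\dom_2$,
\[
\Phi^T_{\sig_2}\big(\pr_{\cc_D}u|_{\sig_2^+}+f\big)=u|_{(0,T)\times\sig_2}.
\]
Likewise $u|_D$ solves $P_{\rho,\cc}=0$ in $D_T$ with zero initial data, vanishing traction on $\sig=\pr D\cap\gam$, and $\nu$-traction $\pr_{\cc_D}u|_{\sig_2^+}$ on $\sig_2$, so
\[
\Phi^{T,+}_{\sig_2}\big(\pr_{\cc_D}u|_{\sig_2^+}\big)=u|_{(0,T)\times\sig_2}.
\]
The two right-hand sides agree by the continuity of the trace, so $\Phi^T_{\sig_2}(\pr_{\cc_D}u|_{\sig_2^+}+f)=\Phi^{T,+}_{\sig_2}(\pr_{\cc_D}u|_{\sig_2^+})$.

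Finally I would conclude by linearity of $\Phi^{T,+}_{\sig_2}$: writing $\Phi^{T,+}_{\sig_2}(\pr_{\cc_D}u|_{\sig_2^+})=\Phi^{T,+}_{\sig_2}(\pr_{\cc_D}u|_{\sig_2^+}+f)-\Phi^{T,+}_{\sig_2}(f)$ and substituting gives precisely $(\Phi^T_{\sig_2}-\Phi^{T,+}_{\sig_2})(\pr_{\cc_D}u|_{\sig_2^+}+f)=-\Phi^{T,+}_{\sig_2}(f)$. The step I expect to demand the most care is the orientation bookkeeping: all four tractions above must be taken with the single fixed normal $\nu$ pointing from $\dom_2$ into $D$, and one has to verify that this matches the conventions built into $\Phi^T_{\sig_2}$ (where $\nu$ is the outer normal of $\dom_2$) and into $\Phi^{T,+}_{\sig_2}$ (where $\nu$ is the inner normal of $D$), as well as that the homogeneous traction on the remaining boundaries $\pr\dom_2\cap\gam$ and $\sig$ is consistent with the homogeneous conditions in those maps. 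A secondary technical point is justifying the interface integration by parts and the pointwise vanishing of $P_{\rho,\cc}u$ off $\sig_2$ at the available regularity of $u=G^N_{\rho,\cc}(T_f)$.
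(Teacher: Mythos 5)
Your proposal is correct and follows essentially the same route as the paper: both derive the interface jump relation $\pr_{\cc_{\dom_2}}u|_{\sig_2}=\pr_{\cc_D}u|_{\sig_2^+}+f$ by splitting the weak form over $\dom_2$ and $D$, use the $H^1$-continuity of the trace of $u$ across $\sig_2$ to equate $\Phi^T_{\sig_2}(\pr_{\cc_{\dom_2}}u|_{\sig_2})$ with $\Phi^{T,+}_{\sig_2}(\pr_{\cc_D}u|_{\sig_2^+})$, and finish by linearity. Your extra care about normal orientation is a sound sanity check but does not change the argument.
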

\begin{proof}
As in the proof of Lemma \ref{lem-surjective-D}, for $\phi\in C_0^\infty((0,T)\times\dom)$
\begin{equation}
\la f,\phi|_{\sig_2}\ra=\int_0^T\int_\dom P_{\rho,\cc}(u) \phi=
\int_0^T\int_\dom\big( \pr_t^2 u \phi+Du:(\cc:D\phi)\big),
\end{equation}
and
\begin{multline}
\int_0^T\int_\dom Du:(\cc:D\phi) =\int_0^T\int_{\dom_2} Du:(\cc|_{\dom_2}:D\phi)+\int_0^T\int_D Du:(\cc|_D:D\phi)\\=
-\int_0^T\int_{\dom_2}L_{\cc}(u)\phi-\int_0^T\int_{D}L_{\cc}(u)\phi\\
+\la \pr_{\cc_{\dom_2}}u|_{\sig_2},\phi|_{\sig_2}\ra-\la \pr_{\cc_D}u|_{\sig_2^+},\phi|_{\sig_2}\ra.
\end{multline}
It follows that
\begin{equation}
\pr_{\cc_{\dom_2}}u|_{\sig_2}=\pr_{\cc_D}u|_{\sig_2^+}+f.
\end{equation}
Since $u(t,\cdot)\in H^1(\dom)$, its Dirichlet data on each side of $\sig_2$ coincide. Therefore
\begin{equation}
\Phi^T_{\sig_2}(\pr_{\cc_{\dom_2}}u|_{\sig_2})=\Phi^{T,+}_{\sig_2}(\pr_{\cc_{D}}u|_{\sig_2^+}).
\end{equation}
The conclusion follows immediately.
\end{proof}

The surjectivity of the difference $\Phi^T_{\sig_2}-\Phi^{T,+}_{\sig_2}$ follows from the next lemma.

\begin{lem}\label{phi-surjective}
The local ND map 
\begin{equation}
\Phi^{T,+}_{\sig_2}: C_0^\infty((0,T];\hbi(\sig_2))\to C_0^\infty((0,T];H^{1/2}(\sig_2))
\end{equation}
 is surjective.
\end{lem}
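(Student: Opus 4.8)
The plan is to produce a preimage by solving a mixed boundary value problem on $D$ and then reading off its Neumann trace. Given $\phi\in C_0^\infty((0,T];H^{1/2}(\sig_2))$, I would first apply Theorem \ref{thm-forward} on the domain $D$, with $\Gamma_u=\sig_2$ and $\Gamma_s=\pr D\setminus\overline{\sig_2}$, to solve
\begin{equation*}
\left\{\begin{array}{l}
P_{\rho,\cc}v=0\,\,\text{\rm in}\,\,D\times(0,T),\\[3pt]
v|_{(0,T)\times\sig_2}=\phi,\\[3pt]
\pr_\cc v|_{(0,T)\times(\pr D\setminus\overline{\sig_2})}=0,\\[3pt]
(v,\pr_t v)|_{t=0}=0.
\end{array}\right.
\end{equation*}
Since $\phi$ is smooth and supported away from $t=0$, it satisfies $\phi\in H^3(0,T;H^{1/2}(\sig_2))$ and the compatibility condition $\phi(0,\cdot)=0$, so this problem has a unique solution $v$ with $v(t)\in H^1(D)$. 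The idea is that the full conormal derivative of $v$ will be the Neumann datum we are looking for.

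Next I would set $g:=\pr_\cc v$, the conormal derivative of $v$ along all of $\pr D$. To make sense of this trace in $H^{-1/2}(\pr D)$ I would first upgrade the time regularity: because $\phi$ and all its $t$-derivatives vanish near $t=0$, the function $\pr_t^2 v$ solves the same mixed problem with Dirichlet datum $\pr_t^2\phi$ and vanishing initial data, so Theorem \ref{thm-forward} gives $\pr_t^2 v\in L^\infty(0,T;H^1(D))$. Hence $L_\cc v=\rho\,\pr_t^2 v\in L^\infty(0,T;L^2(D))$, and the standard conormal–trace theorem makes $g(t)=\pr_\cc v(t)\in H^{-1/2}(\pr D)$ well defined through Green's formula. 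The boundary condition $\pr_\cc v=0$ on $\pr D\setminus\overline{\sig_2}$ then forces $\supp g(t)\subset\overline{\sig_2}$, so that $g(t)\in\hbi(\sig_2)$, and on $\sig_2$ the datum $g$ is exactly the Neumann trace of $v$.

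Iterating the time-differentiation argument shows that $v$, and therefore $g$, is $C^\infty$ in $t$ with values in the relevant spatial spaces, while the finite speed of propagation for $P_{\rho,\cc}$ with vanishing initial data forces $g$ to vanish for $t$ near $0$; thus $g\in C_0^\infty((0,T];\hbi(\sig_2))$. By construction $v$ is precisely the solution appearing in the definition of $\Phi^{T,+}_{\sig_2}$ with Neumann datum $g$, so $\Phi^{T,+}_{\sig_2}g=v|_{\sig_2}=\phi$, which is the asserted surjectivity. I expect the main obstacle to be the regularity bookkeeping just described: in particular the upgrade from the weak regularity $\pr_t^2 v\in L^\infty(0,T;(H^1_{\sig_2}(D))')$ supplied directly by Theorem \ref{thm-forward} to $\pr_t^2 v\in L^\infty(0,T;H^1(D))$, which is exactly what legitimizes the conormal trace and its extension by zero across the edge $\pr\sig_2$. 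This step relies on $\pr_t^2\phi$ being again admissible mixed data and on $\pr\sig_2$ being a Lipschitz curve so that Theorem \ref{thm-forward} applies on $D$.
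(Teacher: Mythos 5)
Your proposal is correct and follows essentially the same route as the paper: solve the mixed problem on $D$ with Dirichlet datum $\phi$ on $\sig_2$ and zero traction on $\pr D\setminus\overline{\sig_2}$, then read off the conormal derivative on $\sig_2$ as the desired Neumann preimage. The regularity upgrade you worry about is exactly what the paper packages into its Corollary of Theorem \ref{thm-forward} (time-independence of the coefficients lets one differentiate in $t$ to all orders), so your extra bookkeeping only makes explicit what the paper leaves implicit.
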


First note that since the coefficients of the equation are time independent, taking time derivatives of all orders and applying Theorem \ref{thm-forward}, we have the following corollary with the same notations as in Theorem \ref{thm-forward}.
\begin{cor}
If $f\in C_0^\infty((0,T];H^{1/2}(\Gamma_u))$, $g\in C_0^\infty((0,T];H^{-1/2}(\Gamma_s))$, 
then there exists a unique $u\in C_0^\infty((0,T];H^1(\dom))$ such that
\begin{equation}
\left\{\begin{array}{l}
P_{\rho,\cc}u=0\,\,\text{\rm in}\,\,\dom,\\[5pt]
u|_{(0,T)\times\Gamma_u}=f,\\[5pt]
\pr_\cc u|_{(0,T)\times\Gamma_s}=g,\\[5pt]
(u,\pr_t u)|_{t=0}=0.
\end{array}\right.
\end{equation}
\end{cor}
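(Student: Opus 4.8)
The plan is to derive the Corollary as a regularity bootstrap of Theorem~\ref{thm-forward}, using crucially that $\rho$ and $\cc$ are independent of $t$. Since $f\in C_0^\infty((0,T];H^{1/2}(\Gamma_u))$ and $g\in C_0^\infty((0,T];H^{-1/2}(\Gamma_s))$ vanish together with all their time derivatives near $t=0$, they belong in particular to $H^3(0,T;H^{1/2}(\Gamma_u))$ and $H^1(0,T;H^{-1/2}(\Gamma_s))$, and the compatibility condition $f(0,\cdot)=u_0|_{\Gamma_u}$ holds with $u_0=0$. First I would apply Theorem~\ref{thm-forward} with $F=0$ and $u_0=u_1=0$ to obtain the unique $u$ in the class $u\in L^\infty(0,T;H^1(\dom))$, $\pr_t u\in L^\infty(0,T;L^2(\dom))$, $\pr_t^2 u\in L^\infty(0,T;(H^1_{\Gamma_u}(\dom))')$.

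The heart of the argument is to promote this to smoothness in $t$ by showing that $\pr_t u$ is again a solution of the same problem, now with boundary data $\pr_t f$, $\pr_t g$. Since $\pr_t f$ and $\pr_t g$ satisfy the same hypotheses, Theorem~\ref{thm-forward} furnishes a solution $v$ of $P_{\rho,\cc}v=0$ with $v|_{\Gamma_u}=\pr_t f$, $\pr_\cc v|_{\Gamma_s}=\pr_t g$ and zero Cauchy data. To identify $v$ with $\pr_t u$ at the low regularity available, rather than differentiating $u$ directly I would integrate $v$: setting $w(t)=\int_0^t v(s)\dd s$, one checks at the level of the weak formulation that $w$ has zero Cauchy data (using $v(0)=0$), that $w|_{\Gamma_u}=\int_0^t\pr_t f\dd s=f$ and $\pr_\cc w|_{\Gamma_s}=\int_0^t\pr_t g\dd s=g$ (using $f(0,\cdot)=g(0,\cdot)=0$), and, because $L_\cc v=\rho\pr_t^2 v$ and $\pr_t v(0)=0$, that $P_{\rho,\cc}w=\rho\pr_t v-\int_0^t L_\cc v\dd s=0$. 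Thus $w$ solves the original problem, so by the uniqueness in Theorem~\ref{thm-forward} we have $w=u$, whence $\pr_t u=v\in L^\infty(0,T;H^1(\dom))$.

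Iterating, and using that $\pr_t^k f\in C_0^\infty((0,T];H^{1/2}(\Gamma_u))$ and $\pr_t^k g\in C_0^\infty((0,T];H^{-1/2}(\Gamma_s))$ for every $k$ so that the hypotheses of Theorem~\ref{thm-forward} persist, I would conclude by induction that $\pr_t^k u\in L^\infty(0,T;H^1(\dom))$ for all $k\ge0$. Then for each $k$ the $H^1(\dom)$-valued map $t\mapsto\pr_t^k u(t)$ has time derivative $\pr_t^{k+1}u\in L^\infty(0,T;H^1(\dom))\subset L^1(0,T;H^1(\dom))$, so $\pr_t^k u\in C([0,T];H^1(\dom))$; hence $u\in C^\infty([0,T];H^1(\dom))$. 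The support condition follows from uniqueness once more: if $f=g=0$ on $(0,\varepsilon)$, then Theorem~\ref{thm-forward} applied on $(0,\varepsilon)$ with vanishing data forces $u\equiv0$ there, so $u\in C_0^\infty((0,T];H^1(\dom))$. Uniqueness in the stated class is inherited directly from Theorem~\ref{thm-forward}.

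The step I expect to be the main obstacle is the identification $\pr_t u=v$ in the second paragraph: a priori $u$ carries only the limited regularity of Theorem~\ref{thm-forward}, so the Neumann trace of $\pr_t u$ is not directly accessible and the formal computation $P_{\rho,\cc}(\pr_t u)=\pr_t P_{\rho,\cc}u=0$ must be replaced by the integrated, weak-formulation argument above, with the boundary identities interpreted in the duality between the $H^{\mp1/2}$ spaces. A minor advantage of integrating $v$ rather than forming difference quotients of $u$ is that $\int_0^t$ stays within $[0,T]$, so no extension of the data past $t=T$ is needed.
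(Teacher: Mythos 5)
Your proposal is correct and follows essentially the same route as the paper, whose entire proof is the one-line observation that since $\rho$ and $\cc$ are time-independent one may take time derivatives of all orders and apply Theorem~\ref{thm-forward} repeatedly. Your integration-of-$v$ argument (identifying $\pr_t u=v$ via uniqueness rather than differentiating $u$ at its limited a priori regularity) is simply a careful, rigorous implementation of that same differentiation-in-time step, and it is sound.
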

In particular, taking $\Gamma_u = \emptyset$ and $\Gamma_s = \Gamma$, this justifies the spaces in between which $\Phi^{T,+}_{\sig_2}$ maps in the statement of the lemma.

\begin{proof}[Proof of Lemma \ref{phi-surjective}]
To prove the surjectivity, let $f\in C_0^\infty((0,T];H^{1/2}(\sig_2))$. We choose $\gam_u=\sig_2$, $\gam_s=\pr D\setminus\overline{\sig_2}$. Let $u\in C_0^\infty((0,T];H^1(\dom))$ be such that
\begin{equation}
\left\{\begin{array}{l}
P_{\rho,\cc}u=0\,\,\text{\rm in}\,\,D,\\[5pt]
u|_{(0,T)\times\Gamma_u}= f,\\[5pt]
\pr_\cc u|_{(0,T)\times\Gamma_s}=0,\\[5pt]
(u,\pr_t u)|_{t=0}=0.
\end{array}\right.
\end{equation}
Since $\Phi^{T,+}_{\sig_2}(\pr_{\cc|_D} u|_{\sig_2^+})=f$, we have our desired conclusion.
\end{proof}

\begin{proof}[Proof of Proposition \ref{interior-prop} (ii)]With the same notation used in Lemma \ref{lem-surjective-N}, for $f\in C^\infty_0((0,T);H^{-1/2}(\sig_2))$ define the operator
\begin{equation}
\begin{array}{c}
\mathscr{K}:C^\infty_0((0,T);\hbi(\sig_2))\to C^\infty_0((0,T);\hbi(\sig_2)),\\[5pt]
\mathscr{K}(f)=\pr_{\cc|_D}u|_{\sig_2^+}+f.
\end{array}
\end{equation}
By Lemmas \ref{lem-surjective-N} and \ref{phi-surjective}, this operator is surjective. It follows from the proof of Lemma \ref{lemma33} that $\mathscr{K}$ is  determined by the knowledge of $\dom$, $D$, $\cc|_D$, $\rho|_D$, and $\Phi^{T,\sig}_{\rho, \cc}$. Its right inverse, which we will denote by $\kappa$, is then determined by the same quantities. 
We have now that $\Phi^T_{\sig_2}-\Phi^{T,+}_{\sig_2}=-\Phi^{T,+}_{\sig_2}\circ\kappa$ is determined by the knowledge of $\dom$, $D$, $\cc|_D$, $\rho|_D$, and $\Phi^{T,\sig}_{\rho, \cc}$. This ends the proof.
\end{proof}

\section{Proofs of Theorem \ref{main-thm} and Theorem \ref{thm-interest}}\label{proof}

\begin{proof}[Proof of Theorem \ref{main-thm}]The time $T_1$ can be chosen arbitrarily small. By Proposition \ref{boundary-prop}, if $\Lambda^{T_1,\sig}_{\rho^{(1)},C^{(1)}}=\Lambda^{T_1,\sig}_{\rho^{(2)},C^{(2)}}$ or $\Phi^{T_1,\sig}_{\rho^{(1)},C^{(1)}}=\Phi^{T_1,\sig}_{\rho^{(2)},C^{(2)}}$, then $\rho^{(1)}|_{D_1}=\rho^{(2)}|_{D_1}$ and $C^{(1)}|_{D_1}=C^{(2)}|_{D_1}$. Now let $\delta$ be as in Proposition \ref{interior-prop}, with $D=D_1$. We can choose $T_2=T_1+2\delta$. If $\Lambda^{T_2,\sig}_{\rho^{(1)},C^{(1)}}
=\Lambda^{T_2,\sig}_{\rho^{(2)},C^{(2)}}$ or $\Phi^{T_2,\sig}_{\rho^{(1)},C^{(1)}}
=\Phi^{T_2,\sig}_{\rho^{(2)},C^{(2)}}$, then $\Lambda^{T_1,\gam_2}_{\rho^{(1)},C^{(1)}}=\Lambda^{T_1,\gam_2}_{\rho^{(2)},C^{(2)}}$ or $\Phi^{T_1,\gam_2}_{\rho^{(1)},C^{(1)}}=\Phi^{T_1,\gam_2}_{\rho^{(2)},C^{(2)}}$, respectively, where these DN and ND maps are taken relative to $\dom\setminus \overline{D_1}$. By Proposition \ref{boundary-prop} it follows that $\rho^{(1)}|_{D_2}=\rho^{(2)}|_{D_2}$ and $C^{(1)}|_{D_2}=C^{(2)}|_{D_2}$. It is clear that we may continue in this way to inductively construct all the times $T_k$.
\end{proof}

\begin{proof}[Proof of Theorem \ref{thm-interest}]
It is clear that we only need to prove the result in the case $R=\dom$. Notice that $\rho^{(I)}$, $C^{(I)}$ are all constant on all the elements of the common partition 
\begin{equation}
\{\tilde D_\gamma\}=\{D_\alpha^{(1)}\cap D_\beta^{(2)}\}.
\end{equation}
 Let $P\in R$ and pick a smooth curve $\omega:[0,1]\to R$ so that $\omega(0)\in\sig$, $\omega(1)=P$, and which intersects the boundaries of the subdomains $\tilde D_\gamma$ only at smooth points and transversally. Let $V_\epsilon$ be the tubular neighborhood of $\omega([0,1])$ of radius $\epsilon>0$. We can choose $\epsilon$ small enough that $V_\epsilon$ only intersects the smooth components of the boundaries of the subdomanins $\tilde D_\gamma$ and $\pr V_\epsilon$ is transversal to all of them. It then follows that each $D'_\gamma=\tilde D_\gamma\cap V_\epsilon$ is a Lipschitz set. 
 
 We can label by $D'_1$, \ldots, $D'_N$ the chain of non-empty sets in $\{D'_\gamma\}$, in the order in which the curve $\omega$ intersects them. This chain satisfies the conditions of Theorem \ref{main-thm} and we may conclude that there is a time $0<T_P<\infty$ such that if $\Lambda^{T_P,\sig}_{\rho^{(1)},C^{(1)}}= \Lambda^{T_P,\sig}_{\rho^{(2)},C^{(2)}}$ or $\Phi^{T_P,\sig}_{\rho^{(1)},C^{(1)}}= \Phi^{T_P,\sig}_{\rho^{(2)},C^{(2)}}$, then $\rho^{(1)}(P)=\rho^{(2)}(P)$ and $C^{(1)}(P)=C^{(2)}(P)$. Choosing $T=\max_{P\in R}T_P<\infty$, we have the result.
\end{proof}

\noindent
\bigskip{\bf\large Acknowledgement}${}$
\newline
This project began while the first author was employed at the Hong Kong University of Science and Technology, Jockey Club Institute for Advanced Study, and concluded while he was visiting Hokkaido University, partially supported by Sichuan University.
The second author was partially supported by Grant-in-Aid for Scientific Research (15K21766, 15H05740) of the Japan Society for  the  Promotion  of  Science  doing  the  research  of  this  paper. The third author was supported by EPSRC grants EP/P01593X/1 and EP/R002207/1.

\appendix

\section{Sub-analytic sets}\label{sas}

In this appendix, for the convenience of the reader, we give the definition and summarize a few of the properties of sub-analytic sets.

Let $X$ be a real analytic manifold.
A set $A\subset X$ is semi-analytic if for any $x\in \overline{A}$ (here $\overline{A}$
denotes the closure of $A$) there exists an open neighborhood $U$ of $x$ in $X$ and finitely many real-analytic functions $f_{ij}:U\to\R$, $i=1,\ldots,p$, $j=1,\ldots,q$,  such that
\begin{equation}
A\cap U=\bigcup_{i=1}^p\bigcap_{j=1}^q \{x\in U:f_{ij}(x) \ast_{ij} 0\},
\end{equation}
where the relations $\ast_{ij}$ are either ``$>$'' or ``$=$''.
  For example, a finite union of linear or curved polyhedra in $\R^n$, whose boundaries are level sets of real-analytic functions, is a semi-analytic set. A good reference for semi-analytic sets is \cite{Bierstone-Milman}.

Now we introduce the notion of a subanalytic set, which is just obtained
in the above definition
by replacing subsets determined by inequalities 
with the ones of images of analytic maps.
That is, $A$ is said to be subanalytic if for any $x \in \overline{A}$ there exist
an open neighborhood $U$ of $x$, real analytic compact manifolds $Y_{i,j}$,
 $i=1,2,\,1\le j\le N$ and real
analytic maps $\Phi_{i,j}:Y_{i,j}\rightarrow X$ such that
\begin{equation}
A\cap U=\bigcup_{j=1}^N(\Phi_{1,j}(Y_{1,j})\setminus\Phi_{2,j}(Y_{2,j}))\,\, \bigcap\,\, U.
\end{equation}
Reference is made to \cite{Bierstone-Milman} and \cite{Kashiwara-Schapira},
where we can find all the required proofs for properties stated below: A family
of subanalytic sets is stable under several set theoretical operations.
Note that, by definition, a semi-analytic subset is subanalytic.
\begin{enumerate}
\item A finite union and a finite intersection of subanalytic subsets are subanalytic.
\item The closure, interior and complement of a subanalytic subset
are again subanalytic. In particular, its boundary is subanalytic.
\item The inverse image of a subanalytic set by an analytic map is subanalytic.
Further, the direct image of a subanalytic set by a proper analytic map
is also subanalytic.
\end{enumerate}

The other important properties needed in this paper 
are the following  ``finiteness property'' and ``triangulation theorem'' of a subanalytic set.
\begin{lem}[Theorem 3.14 \cite{Bierstone-Milman}]{\label{lem:finiteness}}
	Each connected component of a subanalytic set is subanalytic.
Furthermore, connected components of a subanalytic set are locally finite, that is,
for any compact subset $K$ and a subanalytic subset $A$,
the number of connected components of $A$ intersecting $K$ is finite.
\end{lem}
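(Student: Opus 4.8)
The plan is to deduce both assertions from the triangulation theorem for subanalytic sets, the other main tool quoted in this appendix, rather than from the defining property directly. First I would apply that theorem to the closure $\overline{A}$, which is subanalytic by the second of the stability properties listed above, taken compatibly with the locally finite family $\{A,\,\overline{A}\setminus A\}$. This produces a locally finite simplicial complex $K$ (in some $\R^n$) and a subanalytic homeomorphism $\tau\colon|K|\to\overline{A}$ for which $A$ is the image under $\tau$ of a union $\bigcup_{\sigma\in S}\mathring{\sigma}$ of relatively open simplices of $K$. Working with $\overline{A}$ rather than $A$ sidesteps the fact that $A$ itself need not be locally closed.

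For the finiteness statement, let $K_0\subset X$ be compact. Since $\tau$ is a homeomorphism onto $\overline{A}$ and $\overline{A}\cap K_0$ is compact, its preimage $\tau^{-1}(\overline{A}\cap K_0)$ is a compact subset of $|K|$, so by local finiteness of the complex it meets only finitely many simplices. Hence only finitely many of the open simplices $\mathring{\sigma}$, $\sigma\in S$, have images meeting $K_0$. Every connected component of $A$ that intersects $K_0$ contains a point of $A\cap K_0$, and that point lies in one of these finitely many simplex images; since distinct components are disjoint, only finitely many components of $A$ can meet $K_0$. This is precisely the asserted local finiteness of the connected components.

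For the first assertion I would use that connected components are preserved by the homeomorphism $\tau$, so it suffices to show that the image of each connected component of $\bigcup_{\sigma\in S}\mathring{\sigma}$ is subanalytic. Each such component is again a union of open simplices $\mathring{\sigma}$, and this union is locally finite because $K$ is. The image $\tau(\mathring{\sigma})$ of a single open simplex is subanalytic: an open simplex is semi-analytic, and $\tau$ being a subanalytic homeomorphism, one obtains this from the subanalyticity of the graph of $\tau$ together with the direct-image property (3) applied locally, where the relevant projection is proper. Since subanalyticity is a local condition and near any point of $X$ only finitely many of the sets $\tau(\mathring{\sigma})$ occur, the locally finite union is subanalytic by the first stability property. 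Thus each connected component of $A$ is subanalytic.

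The genuine substance is entirely contained in the triangulation theorem itself, specifically in the assertions that it may be taken both locally finite and compatible with the pair $\{A,\,\overline{A}\setminus A\}$, and in the fact that a subanalytic homeomorphism carries open simplices to subanalytic sets; once these are granted, the two conclusions follow by the soft counting and locality arguments above. I therefore expect the main obstacle to be correctly invoking the triangulation theorem and the image property, which is where the real content of \cite{Bierstone-Milman} and \cite{Kashiwara-Schapira} enters.
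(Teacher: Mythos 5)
The paper does not actually prove this lemma: it is quoted verbatim as Theorem 3.14 of \cite{Bierstone-Milman}, and the appendix explicitly defers all proofs of the listed properties to \cite{Bierstone-Milman} and \cite{Kashiwara-Schapira}. Your derivation from the triangulation theorem (Lemma \ref{lemma:triangulation}) is therefore a genuinely different route, and, read as a formal deduction from the other statement quoted in this appendix, it is essentially correct, with a few adjustments. First, the quoted triangulation theorem partitions the whole manifold $X$, not $\overline{A}$ (which is not itself a real analytic manifold), so you should triangulate $X$ compatibly with the three-element partition $\{A,\ \overline{A}\setminus A,\ X\setminus\overline{A}\}$, all of whose members are subanalytic by the stability properties; this changes nothing in your counting arguments. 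Second, conclusion (1) of Lemma \ref{lemma:triangulation} already asserts that each image $\hat{\sigma}=i(|\sigma|)$ is subanalytic, so the detour through the graph of $\tau$ and proper projections is unnecessary. Third, your local-finiteness count implicitly uses that each open simplex image is connected and contained in $A$, hence lies in a single component of $A$, so that distinct components meeting the compact set must use distinct simplices; this should be said, though it is immediate. Finally, property 1 of the appendix is stated only for finite unions, so passing to locally finite unions does require the locality of the definition of subanalyticity, as you note. The one substantive caveat is logical rather than mathematical: the triangulation theorem for subanalytic sets is itself proved in the literature using stratification and finiteness results of the same depth as Theorem 3.14, so your argument buys economy within this appendix rather than an independent proof; the paper avoids the issue entirely by citing both statements as imported facts.
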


In particular, for two relatively compact subanalytic subsets $A$ and $B$,
the number of connected components of $A \cap B$ is always finite.

\begin{lem}[Proposition 8.2.5 \cite{Kashiwara-Schapira}]{\label{lemma:triangulation}}
	Let $X= \underset{\lambda \in \Lambda}{\sqcup} X_\lambda$ be a locally finite partition
	of $X$ by subanalytic subsets. Then there exist a simplicial complex
	$\mathbf{S} = (S,\Delta)$ and a homeomorphism $i: |\mathbf{S}| \to X$ such that
\begin{enumerate}
	\item for any simplex $\sigma \in \Delta$, the image $\hat{\sigma} := i(|\sigma|)$ is subanalytic in $X$
		and real analytic smooth at every point in $\hat{\sigma}$.
\item for any simplex $\sigma \in \Delta$, there exists $\lambda \in \Lambda$
	with $i(|\sigma|) \subset X_\lambda$.
\end{enumerate}
\end{lem}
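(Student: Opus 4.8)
The plan is to prove this by induction on $n = \dim X$, following the classical \L{}ojasiewicz--Hironaka triangulation method adapted to the subanalytic category. Since triangulation is a local-to-global matter and the partition is locally finite, I would first reduce, by working in analytic charts and patching local triangulations, to the situation where $X$ is an open relatively compact piece of $\R^n$ and the partition has finitely many members; this reduction uses only that subanalyticity is preserved under the set-theoretic operations and under analytic maps listed above. Before triangulating I would refine $\{X_\lambda\}$ to a locally finite subanalytic \emph{stratification} $\mathcal{S}$ compatible with the partition: each stratum a connected smooth subanalytic submanifold, each $X_\lambda$ a union of strata, and the frontier condition holding (the closure of each stratum is a union of strata of strictly smaller dimension). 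Existence of such a stratification follows from the closure and finiteness properties of subanalytic sets. The point of this step is that \emph{any} triangulation compatible with $\mathcal{S}$ automatically yields conclusions (1) and (2): each open simplex will land in a single stratum, hence in a single $X_\lambda$, and will be subanalytic and smooth there.

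For the inductive step I would choose generic linear coordinates so that the projection $\pi : \R^n \to \R^{n-1}$ is in \emph{good position} relative to all strata, i.e. each stratum projects with the cylindrical property: over the members of a suitable subanalytic partition of $\R^{n-1}$ the set $X$ is the disjoint union of finitely many graphs of continuous subanalytic functions $\xi_1 < \cdots < \xi_k$ together with the open bands lying strictly between consecutive graphs. The images of the strata under $\pi$, together with the ``apparent contour'' formed by the critical values of $\pi$ restricted to each stratum, constitute a locally finite subanalytic partition of $\R^{n-1}$; applying the inductive hypothesis to it produces a compatible triangulation $i' : |\mathbf{S}'| \to \R^{n-1}$.

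I would then \emph{lift} the triangulation cylinder by cylinder. Over the closure of each open base simplex $\tau$ of $\mathbf{S}'$ the section functions $\xi_1 \le \cdots \le \xi_k$ are continuous and subanalytic, and are strictly ordered over the open simplex; placing a vertex on each graph above the barycenter of $\tau$ and coning the intervening bands produces, for each $\tau$, a stack of simplices whose images realize exactly the graphs and the bands. Collecting these over all $\tau$ gives the simplicial complex $\mathbf{S}$ and the homeomorphism $i : |\mathbf{S}| \to X$, and one checks that the pieces glue correctly along faces using the frontier condition and the continuity of the $\xi_j$ up to $\partial\tau$.

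The main obstacle is securing the cylindrical structure with section functions $\xi_j$ that are \emph{continuous and non-crossing up to the closed base simplices}: a priori these functions are only analytic over open cells and could blow up or coalesce as one approaches $\partial\tau$. Controlling this is where the \L{}ojasiewicz inequality and the curve selection lemma for subanalytic sets enter, to bound the oscillation of the $\xi_j$ near the boundary and, if necessary, to justify a further subanalytic subdivision of the base triangulation that separates the graphs. Once the $\xi_j$ are known to extend continuously and without crossings over each closed base simplex, the coning construction and the verification of the incidence relations are routine, and properties (1) and (2) follow from compatibility with the stratification $\mathcal{S}$.
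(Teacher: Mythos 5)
First, a point of comparison: the paper does not prove this lemma at all --- it is quoted as Proposition 8.2.5 of Kashiwara--Schapira, and the appendix explicitly defers all proofs to that reference and to Bierstone--Milman. So there is no in-paper argument to measure yours against; what you have written is an attempt to reconstruct the classical triangulation theorem for subanalytic sets, and your outline does follow the standard \L{}ojasiewicz--Hironaka--Hardt route (refine the partition to a stratification, induct on dimension via a projection in good position, decompose cylindrically into graphs and bands of section functions over a triangulated base, then cone).

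As a proof, however, the text has genuine gaps, and they sit exactly where the theorem's difficulty lives. (i) The opening reduction ``patch local triangulations'' is not routine: triangulations of overlapping charts need not be combinatorially compatible, and making them so requires a \emph{relative} triangulation theorem (triangulate extending a given triangulation of a closed subanalytic subset), which is itself proved by the same induction and cannot simply be assumed. The standard proofs avoid patching by producing one global cylindrical decomposition for a relatively compact set and then exhausting $X$; if you insist on the local-to-global route you must state and prove the relative version. (ii) The heart of the induction --- that after a suitable refinement of the base partition the section functions $\xi_1<\dots<\xi_k$ extend continuously and without crossings to the \emph{closed} base simplices --- is named as ``the main obstacle'' but not resolved; invoking the \L{}ojasiewicz inequality and the curve selection lemma is a pointer, not an argument. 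The standard fix is to enlarge the base partition \emph{before} applying the inductive hypothesis by the subanalytic ``bad set'' where the fiber cardinality jumps or sections coalesce (of dimension at most $n-2$ for a generic projection), so that over each open cell of the refined base the sections are analytic, bounded and separated, and their continuous non-crossing extension to the closure then follows from the frontier condition. (iii) Conclusion (1) demands that $i(|\sigma|)$ be real analytic smooth at every point, so you must also record that the graphs and bands over open base simplices are analytic submanifolds; this needs the $\xi_j$ to be analytic, not merely continuous and subanalytic, on open cells, which is precisely where your preliminary refinement to a stratification is used and should be made explicit. None of these is a wrong turn, but until they are carried out the write-up is a plan for the known proof rather than a proof.
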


\bibliography{hyperbolic}
\bibliographystyle{plain}
\end{document}